\title[$p$-adic higher Mahler measures]{A note on $p$-adic higher Mahler measures}
\date{}
\author{Yu Katagiri}
\newtheorem{thm}{Theorem}[section]
\newtheorem*{thm*}{Theorem}
\newtheorem{lem}[thm]{Lemma}
\newtheorem*{lem*}{Lemma}
\newtheorem{prop}[thm]{Proposition}
\newtheorem*{prop*}{Proposition}
\newtheorem{cor*}{Corollary}
\theoremstyle{definition}
\newtheorem{dfn}[thm]{Definition}
\newtheorem*{dfn*}{Definition}
\newtheorem{ex}[thm]{Example}
\newtheorem*{ex*}{Example}
\newtheorem{rmk}[thm]{Remark}
\newtheorem*{rmk*}{Remark}
\makeatletter\@namedef{subjclassname@2020}{\textup{2020} Mathematics Subject Classification}\makeatother
\keywords{$p$-adic higher Mahler measures, $p$-adic analysis, multiple polylogarithm}
\subjclass[2020]{Primary: 11R06; secondary: 11M99}
\begin{document}
\maketitle

\begin{abstract}
Kurokawa, Lal\'{\i}n and Ochiai introduced and studied the higher Mahler measures, which are generalization of the classical Mahler measure. In this article, we introduce $p$-adic higher Mahler measures and prove $p$-adic analogues of Akatsuka's results.
\end{abstract}

\section{Introduction}

Let $|\cdot |_\infty$ be the usual absolute value on $\mathbb{C}$. For a non-zero Laurent polynomial $f \in \mathbb{C}[t_1^{\pm 1}, \dots ,t_n^{\pm 1}]$ of $n$ variables, the {\it Mahler measure} of $f$ is given by
\begin{align*}
 m(f) \coloneqq& \frac{1}{(2 \pi \sqrt{-1})^n} \int_{T^n} \log |f(z_1, \dots ,z_n)|_\infty \frac{dz_1}{z_1} \cdots \frac{dz_n}{z_n} \in \mathbb{R},
\end{align*}
where
\begin{align*}
T^n=\{(z_1, \dots ,z_n) \in \mathbb{C}^n \mid |z_1|_\infty= \dots =|z_n|_\infty=1\}
\end{align*}
is the $n$-torus. This integral in  fact exists even if $f$ has zeroes on $T^n$. 
It is known that Mahler measures appear in several areas and relate with various objects, for example, entropies in dynamical systems, and special values of $L$-functions in number theory (see \cite{LW88}, \cite{LSW90}, \cite{Sm91}, \cite{RV97} and \cite{De97} for instance).

Kurokawa, Lal\'{\i}n and Ochiai considered the following generalization \cite[Definition 1]{KLO08}: for a non-zero Laurent polynomial  $f \in \mathbb{C}[t_1^{\pm 1}, \dots ,t_n^{\pm 1}]$ and a non-negative integer $k$, we define
\begin{align*}
 m_k(f) \coloneqq& \frac{1}{(2 \pi \sqrt{-1})^n} \int_{T^n} \log^k |f(z_1, \dots ,z_n)|_\infty \frac{dz_1}{z_1} \cdots \frac{dz_n}{z_n} \in \mathbb{R}
\end{align*}
and call $m_k(f)$ the {\it $k$-higher Mahler measure} of $f$. Note that $m_1(f)=m(f)$ and $m_0(f)=1$. Furthermore, Akatsuka introduced the {\it zeta Mahler measure} of $f$ to be a holomorphic function
\begin{align*}
Z(s, f) \coloneqq \frac{1}{(2 \pi \sqrt{-1})^n} \int_{T^n}  |f(z_1, \dots ,z_n)|^s_\infty \frac{dz_1}{z_1} \cdots \frac{dz_n}{z_n},
\end{align*}
where $s$ is a complex variable \cite[Section 1]{Ak09}. 
This integral converges absolutely in ${\rm Re}(s)>\sigma_0(f)$ for some $\sigma_0(f) \leq 0$ \cite[Proposition 2.1]{Ak09} and $Z(s, f)$ has the Taylor coefficients $m_k(f)$, i.e., 
\begin{align*}
Z(s, f)=\sum_{k=0}^\infty \frac{m_k(f)}{k!}s^k.
\end{align*}
%


The following two examples are known in previous works.

\begin{thm}[{\cite[Theorem 4(1)]{Ak09}}]\label{Ak1}
Let $c \in \mathbb{R}$ with $c>2$ and $f(t)=t^2-ct+1$. Then, for any $s \in \mathbb{C}$, we have
\begin{align*}
Z(s, f)=\alpha_c^s {}_2F_1\left(-s, -s ; 1 ; \beta_c^2\right),
\end{align*}
where $\alpha_c$ and $\beta_c$ are the roots of $f(t)=0$ with $0<\beta_c<1<\alpha_c$, and the generalized hypergeometric series ${}_{r+1}F_r$ is given by
\begin{align*}
{}_{r+1}F_r \left( a_1, \cdots, a_{r+1} ; b_1, \cdots, b_r ; z\right)=\sum_{n=0}^\infty \frac{(a_1)_n \cdots (a_{r+1})_n}{(b_1)_n \cdots (b_{r})_n n!}z^n
\end{align*}
with the Pochhammer symbol $(\cdot)_n$ defined by $(a)_0=1$ and $(a)_n=a(a+1)\cdots (a+n-1)$ for $n \geq 1$.
\end{thm}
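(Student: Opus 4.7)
The plan is to compute the integral $Z(s,f)$ explicitly by factoring $f$, exploiting the reciprocal structure $\alpha_c\beta_c=1$, and then expanding as a double binomial series that collapses to the ${}_2F_1$ on the right-hand side.

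First I would write $f(t)=(t-\alpha_c)(t-\beta_c)$ and observe that $\alpha_c\beta_c=1$ (from the constant term of $f$), so $\beta_c=1/\alpha_c$. Parametrizing the unit circle by $z=e^{i\theta}$, I would rewrite each factor as
\begin{align*}
|z-\alpha_c|_\infty &= \alpha_c\,|1-\beta_c z|_\infty, \\
|z-\beta_c|_\infty &= |z|_\infty\cdot|1-\beta_c\bar z|_\infty = |1-\beta_c\bar z|_\infty.
\end{align*}
Since $\beta_c\in\mathbb{R}$ we have $\overline{1-\beta_c z}=1-\beta_c\bar z$, hence $|1-\beta_c\bar z|_\infty=|1-\beta_c z|_\infty$, and therefore
\begin{align*}
|f(z)|_\infty^{s} = \alpha_c^{s}\,|1-\beta_c z|_\infty^{2s} = \alpha_c^{s}\,(1-\beta_c z)^{s}(1-\beta_c\bar z)^{s},
\end{align*}
where the two factors on the right are defined by the principal branch (this is harmless because $\beta_c<1$ keeps $1-\beta_c z$ in the open right half-plane for $|z|_\infty=1$).

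Next I would apply the binomial series to each factor. Using $(-1)^n\binom{s}{n}=(-s)_n/n!$, I get
\begin{align*}
(1-\beta_c z)^{s}(1-\beta_c\bar z)^{s} = \sum_{n,m\geq 0} \frac{(-s)_n(-s)_m}{n!\,m!}\,\beta_c^{n+m}\,z^n\bar z^m.
\end{align*}
Both series converge absolutely and uniformly for $|z|_\infty=1$ since $\beta_c<1$, so I may integrate term-by-term against $\frac{1}{2\pi i}\frac{dz}{z}=\frac{d\theta}{2\pi}$. The orthogonality relation $\frac{1}{2\pi}\int_0^{2\pi}e^{i(n-m)\theta}d\theta=\delta_{n,m}$ kills every off-diagonal term, leaving
\begin{align*}
Z(s,f) = \alpha_c^{s}\sum_{n=0}^\infty \frac{(-s)_n^2}{(n!)^2}\,\beta_c^{2n}.
\end{align*}
Recognizing $(n!)^2=(1)_n\,n!$ identifies this sum with ${}_2F_1(-s,-s;1;\beta_c^2)$, which is the desired identity.

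I do not anticipate a deep obstacle: the computation is essentially self-contained. The only delicate step is justifying the branch choice used to split $|1-\beta_c z|_\infty^{2s}$ as a product of two holomorphic factors for complex $s$, which is clean because $\beta_c<1$ keeps both quantities away from the negative real axis, and the resulting series converges absolutely and uniformly on $T^1$ so term-by-term integration is legitimate for every $s\in\mathbb{C}$. Since both $Z(s,f)$ and the right-hand side are entire in $s$ (the hypergeometric series terminates issues aside, it has infinite radius of convergence in $s$ for fixed $\beta_c^2<1$), the identity, once established from the uniform expansion, holds for all $s$.
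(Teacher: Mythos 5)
Your argument is correct: the factorization $|f(z)|_\infty=\alpha_c\,|1-\beta_c z|_\infty^2$ on $|z|_\infty=1$ (using $\alpha_c\beta_c=1$), the splitting of $|1-\beta_c z|_\infty^{2s}$ into two principal-branch binomial series (legitimate since $\beta_c<1$ keeps both factors in the right half-plane), and the orthogonality of $z^n\bar z^m$ against $\frac{d\theta}{2\pi}$ all check out, and the surviving diagonal sum is exactly $\alpha_c^s\,{}_2F_1(-s,-s;1;\beta_c^2)$. The paper itself quotes this theorem from Akatsuka without reproving it, but your computation is precisely the archimedean counterpart of the proof the paper gives for the $p$-adic analogue (Theorem \ref{main2}), where the same three-factor splitting and term-by-term extraction of the constant coefficient appear with the Shnirelman integral in place of the circle integral.
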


\begin{rmk}\label{Ak comp}
Let $c \in \mathbb{R}$ with $c>2$ and $f(t)=t^2-ct+1$. Since 
\begin{align*}
\frac{d^kZ}{ds^k}(0, f)=m_k(f)
\end{align*}
for any $k \geq 0$, we can compute
\begin{align*}
m_2(f)&=\log^2 \alpha_c+2\operatorname{Li}_2(\beta_c^2), \\
m_3(f)&=\log^3 \alpha_c+6(\log \alpha_c)\operatorname{Li}_2(\beta_c^2)-12\operatorname{Li}_{(1, 2)}(\beta_c^2)
\end{align*}
by Theorem \ref{Ak1}. Here, for a tuple $\textbf{k}=(k_1, \cdots , k_r) \in \mathbb{Z}_{>0}^r$, the {\it multiple polylogarithm} $\operatorname{Li}_{\textbf{k}}(t)$ is given by
\begin{align}\label{def multipolylog}
\operatorname{Li}_{\textbf{k}}(t)=\sum_{0<m_1<\cdots<m_r}\frac{t^{m_r}}{m_1^{k_1}\cdots m_r^{k_r}}
\end{align}
and $\operatorname{Li}_{\textbf{k}}(t)$ is absolutely convergent on $|t|_\infty<1$.
\end{rmk}

\begin{thm}[{\cite[Theorem 17]{KLO08}}, {\cite[Theorem 6]{Ak09}}]\label{Ak2}
Let $c \in \mathbb{R}$ with $c>4$ and $f(t_1, t_2)=t_1+t_1^{-1}+t_2+t_2^{-1}+c$. Then we have
\begin{align*}
Z(s, f)=c^s {}_3F_2\left(\frac{1}{2}, -\frac{s}{2}, \frac{1-s}{2} ; 1, 1 ; \frac{16}{c^2}\right).
\end{align*}
\end{thm}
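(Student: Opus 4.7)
The plan is to evaluate $Z(s,f)$ directly by parametrizing the torus, expanding the integrand as a power series, and then recognizing the resulting coefficients via Pochhammer symbols. First, write $z_j = e^{\sqrt{-1}\theta_j}$, so that $f = c + 2\cos\theta_1 + 2\cos\theta_2$. Since $c > 4$, the function $f$ is strictly positive on $T^2$, so $Z(s, f)$ is entire in $s$, and we may factor out $c^s$ and expand
\begin{align*}
 (1 + \xi)^s = \sum_{n \ge 0} \binom{s}{n}\xi^n, \qquad \xi \coloneqq \frac{2\cos\theta_1 + 2\cos\theta_2}{c},
\end{align*}
where $|\xi|_\infty \le 4/c < 1$, so the series converges absolutely and uniformly on $T^2$ and can be integrated term by term.

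After a further binomial expansion of $(2\cos\theta_1 + 2\cos\theta_2)^n$, I will use the standard Fourier identity $\frac{1}{2\pi}\int_0^{2\pi}(2\cos\theta)^k d\theta = \binom{k}{k/2}$ for even $k$ (and $0$ for odd $k$) to carry out both angular integrations. This kills every term with odd total degree and leaves, for $n = 2m$, an inner sum $\sum_{k=0}^m \binom{2m}{2k}\binom{2k}{k}\binom{2m-2k}{m-k}$. Each summand rewrites as $\binom{2m}{m}\binom{m}{k}^2$, so the Vandermonde--Chu identity $\sum_{k=0}^m \binom{m}{k}^2 = \binom{2m}{m}$ collapses it to $\binom{2m}{m}^2$, and we obtain
\begin{align*}
 Z(s, f) = c^s \sum_{m=0}^\infty \binom{s}{2m}\binom{2m}{m}^2 \frac{1}{c^{2m}}.
\end{align*}

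The remaining, and main, step is to match this series with the stated ${}_3F_2$. To do so, I split the falling factorial $s(s-1)\cdots(s-2m+1)$ into its even- and odd-indexed subproducts, obtaining $4^m(-s/2)_m\, ((1-s)/2)_m$, and then invoke the classical duplication identity $(2m)! = 4^m m!\,(1/2)_m$. Combining these rewrites the $m$-th summand as
\begin{align*}
 \frac{16^m (1/2)_m (-s/2)_m ((1-s)/2)_m}{(m!)^3\, c^{2m}},
\end{align*}
which is exactly the $m$-th term of ${}_3F_2\!\left(1/2, -s/2, (1-s)/2 ; 1, 1 ; 16/c^2\right)$. The only genuine obstacle is this last Pochhammer bookkeeping and the identification of the right combinatorial identity to collapse the inner sum; once these are in place, the rest is a transparent unwinding of the integral representation.
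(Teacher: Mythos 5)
Your argument is correct: the term-by-term integration is justified since $|\xi|_\infty\le 4/c<1$, the identity $\binom{2m}{2k}\binom{2k}{k}\binom{2m-2k}{m-k}=\binom{2m}{m}\binom{m}{k}^2$ and the Vandermonde collapse are right, and the Pochhammer bookkeeping $\binom{s}{2m}=\frac{(-s/2)_m((1-s)/2)_m}{m!\,(1/2)_m}$ together with $(2m)!=4^m m!(1/2)_m$ does yield the stated ${}_3F_2$. Note that the paper itself only cites this theorem without proof, but your computation is essentially the archimedean twin of the paper's proof of its $p$-adic analogue (Theorem \ref{main3}), which expands $(1+g)^s$, extracts the constant Fourier coefficient to get $\sum_i (2m)!/(i!(m-i)!)^2$, and performs the same Pochhammer rewriting.
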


\begin{rmk}
The Mahler measure of $f(t_1, t_2)=t_1+t_1^{-1}+t_2+t_2^{-1}+c$ was studied by Rodriguez-Villegas \cite{RV97}. He proved
\begin{align}\label{RV}
m(f)=\log c -\frac{2}{c^2} {}_4F_3\left(\frac{3}{2}, \frac{3}{2}, 1, 1 ; 2, 2, 2 ; \frac{16}{c^2}\right).
\end{align}
\end{rmk}

In this article, we introduce a $p$-adic analogue of higher Mahler measures and consider $p$-adic analogues of the above theorems.
Let $p$ be a prime and $\mathbb{C}_p$ be the completion of $\overline{\mathbb{Q}_p}$ with the norm $|\cdot |$ normalized so that $|p|=p^{-1}$. In \cite{BD99}, Besser and Deninger introduced the {\it $p$-adic Mahler measure}, which is a $p$-adic analogue of the Mahler measures, as follows: for $f \in \mathbb{C}_p[t_1^{\pm 1}, \dots ,t_n^{\pm 1}]$ with no zeroes of $p$-adic absolute value one, we define 
\begin{align*}
m_{p}(f) \coloneqq \lim_{\substack{N \to \infty \\ (N,p)=1}} \frac{1}{N^n} \sum_{\zeta \in \mu_N^n} \log_p f(\zeta_1, \cdots, \zeta_n).
\end{align*}
Here, $\mu_N$ is the set of the $N$-th roots of unity in $\mathbb{C}_p$ and $\log_p : \mathbb{C}_p^{\times} \rightarrow \mathbb{C}_p$ is the $p$-adic logarithm normalized so that $\log_p p=0$. (It is known that the multiplicative group of $\mathbb{C}_p$ has the decomposition
\begin{align*}
\mathbb{C}_p^{\times} \simeq p^{\mathbb{Q}}\times \mu_{ur} \times U_0,
\end{align*}
where $\mu_{ur}$ is the set of roots of unity in $\mathbb{C}_p$ whose order in $\mathbb{C}_p^{\times}$ is prime to $p$, and $U_0 =\{z \in \mathbb{C}_p \mid |z-1|<1\}$.
If we denote by $\langle a \rangle$ the corresponding part of $a$ in $U_0$ for $a \in \mathbb{C}_p^{\times}$, then the value $\log_p a$ is given by 
\begin{align*}
\log_p a=\sum_{n=1}^\infty \frac{(-1)^{n-1}}{n}(\langle a\rangle-1)^n.
\end{align*}
See also \cite[Proposition 5.4]{Wa97}.)

Besser and Deninger studied $p$-adic analogues of several known results of the classical Mahler measure. Especially, they proved that the $p$-adic Mahler measure relates to the regulator map into (modified) syntomic cohomology. This is a $p$-adic analogue of the remarkable relation between the Mahler measure and the regulator map into Deligne cohomology, which was proved by Deninger \cite{De97}. (See \cite[Section 2]{BD99} for the detail.) Moreover, Deninger introduced the $p$-adic entropy, which is a $p$-adic analogue of the periodic entropy, and proved that it relates to the $p$-adic Mahler measure. This corresponds to an analogue of \cite[Theorem 3.1]{LSW90} in dynamical systems. (See \cite{De09}, \cite{Br10}, \cite{De12} and \cite{Ka21a} for the theory of the $p$-adic entropy.)

In the same way as Besser-Deninger's definition, we define the {\it $p$-adic $k$-higher Mahler measure} by
\begin{align}\label{deflim m_p}
m_{p, k}(f) \coloneqq \lim_{\substack{N \to \infty \\ (N,p)=1}} \frac{1}{N^n} \sum_{\zeta \in \mu_N^n} \log_p^k f(\zeta_1, \cdots, \zeta_n)
\end{align}
for $k \geq 1$. (See Section 2 for the existence of $m_{p, k}(f)$.) The following is one of our main results.

\begin{thm}\label{main1}
Let $\alpha, \beta \in \mathbb{C}_p$ with $0<|\beta|<1<|\alpha|$. Then, for each $k \geq 1$, we have
\begin{align*}
&m_{p, k}((t-\alpha)(t-\beta)) \\
=&\log_p^k \alpha+\sum_{\substack {i+j \leq k \\ i, j \geq 1}} (-1)^{i+j} \frac{k!}{(k-i-j)!}\left(\log_p^{k-i-j} \alpha\right) \operatorname{Li}_{\left(\{1\}^{i-1}\ast \{1\}^{j-1}, 2\right)} \left(\frac{\beta}{\alpha}\right),
\end{align*}
where $\ast$ is the harmonic product defined in Section 3.
\end{thm}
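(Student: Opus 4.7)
The plan is to evaluate $m_{p,k}(f)$ directly from the defining limit by expanding $\log_p^k f(\zeta)$ for $\zeta$ a root of unity. The hypotheses $|\alpha|>1>|\beta|$ let us factor, for any $\zeta$ with $|\zeta|=1$,
\begin{equation*}
(\zeta-\alpha)(\zeta-\beta)=-\alpha\zeta\bigl(1-\zeta/\alpha\bigr)\bigl(1-\beta/\zeta\bigr),
\end{equation*}
and since $\log_p$ annihilates $-1$ and every root of unity, additivity of $\log_p$ gives
\begin{equation*}
\log_p f(\zeta)=\log_p\alpha-\sum_{m\ge 1}\frac{(\zeta/\alpha)^m}{m}-\sum_{n\ge 1}\frac{(\beta/\zeta)^n}{n},
\end{equation*}
both series being $p$-adically convergent on $|\zeta|=1$. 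Denote them by $L_\alpha(\zeta)$ and $L_\beta(\zeta)$.

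Next, I raise to the $k$-th power via the multinomial theorem, expand each $L_\alpha^i L_\beta^j$ as a multi-indexed series, average over $\zeta\in\mu_N$, and let $N\to\infty$ through integers with $(N,p)=1$. The orthogonality $\lim_N N^{-1}\sum_{\zeta\in\mu_N}\zeta^r=\delta_{r,0}$ annihilates every monomial whose total $\zeta$-exponent $\sum_l m_l-\sum_l n_l$ is nonzero, so contributions come only from $(i,j)=(0,0)$ (producing $\log_p^k\alpha$) and from pairs with $i,j\ge 1$ and $m_1+\cdots+m_i=n_1+\cdots+n_j$; the mixed cases in which exactly one of $i,j$ equals zero vanish because positive integers cannot sum to zero. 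What remains is
\begin{equation*}
\sum_{\substack{i,j\ge 1\\ i+j\le k}}\frac{(-1)^{i+j}\,k!}{(k-i-j)!\,i!\,j!}(\log_p\alpha)^{k-i-j}\sum_{M\ge 1}(\beta/\alpha)^M\!\!\sum_{\substack{m_1+\cdots+m_i=M\\ n_1+\cdots+n_j=M\\ m_l,n_l\ge 1}}\!\frac{1}{m_1\cdots m_i\,n_1\cdots n_j}.
\end{equation*}

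It then suffices to show that $\tfrac{1}{i!\,j!}$ times the doubly nested sum equals $\operatorname{Li}_{(\{1\}^{i-1}\ast\{1\}^{j-1},2)}(\beta/\alpha)$. The key input is the one-sided identity
\begin{equation*}
\frac{1}{i!}\sum_{\substack{m_1+\cdots+m_i=M\\ m_l\ge 1}}\frac{1}{m_1\cdots m_i}=\frac{1}{M}\sum_{0<k_1<\cdots<k_{i-1}<M}\frac{1}{k_1\cdots k_{i-1}},
\end{equation*}
which I would prove by showing both sides equal $|s(M,i)|/M!$, with $|s(M,i)|$ the unsigned Stirling number of the first kind: the left side via $(1-y)^{-x}=\exp(x\operatorname{Li}_1(y))=\sum_M(x)_My^M/M!$, and the right side via $\prod_{k=1}^{M-1}(1+t/k)=\binom{t+M-1}{M-1}$ together with $(t)_M=\sum_i|s(M,i)|t^i$. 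Applying the identity to the $m$- and $n$-sums separately turns the inner double sum into $M^{-2}$ times a product of two truncated all-ones nested sums bounded above by $M$; the stuffle (harmonic) product formula for such truncated nested sums expands this product as a sum of truncated nested sums indexed precisely by the terms of $\{1\}^{i-1}\ast\{1\}^{j-1}$, and weighting by $y^M/M^2$ and summing over $M$ delivers exactly $\operatorname{Li}_{(\{1\}^{i-1}\ast\{1\}^{j-1},2)}(\beta/\alpha)$. The main obstacle I anticipate is the justification of the termwise exchange of the limit $\lim_N N^{-1}\sum_\zeta$ with the infinite series defining $L_\alpha^i L_\beta^j$; this should follow from the $p$-adic uniform convergence of those series in $\zeta$ on $|\zeta|=1$, together with the existence and continuity results for $m_{p,k}(f)$ established in Section~2.
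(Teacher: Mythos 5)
Your proposal is correct and follows essentially the same route as the paper: factor $(\zeta-\alpha)(\zeta-\beta)=-\alpha\zeta(1-\zeta/\alpha)(1-\beta/\zeta)$, expand the logarithms as series, extract the constant Fourier coefficient via the Shnirelman integral (the paper's Lemma 2.2(2) on the affinoid algebra $\mathcal{R}_1(\mathbb{C}_p)$ supplies exactly the limit--sum interchange you flag as the main obstacle, since the coefficients $m^{-1}\alpha^{-m}$ and $n^{-1}\beta^{n}$ tend to zero), and identify the resulting double nested sum with $\operatorname{Li}_{(\{1\}^{i-1}\ast\{1\}^{j-1},2)}$ via the stuffle product (the paper's Lemma 3.4, quoted from Kaneko--Yamamoto). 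The only cosmetic difference is that your symmetric-sum identity, which you prove via Stirling numbers of the first kind, is precisely the identity $\log_p^{j}(1-x)=(-1)^{j}j!\operatorname{Li}_{\{1\}^{j}}(x)$ that the paper instead cites from Arakawa--Kaneko.
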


We note that the radius of the $p$-adic convergence of the multiple polylogarithm series is $1$ (see the proof of \cite[Theorem 1.6]{Ka21b} for example), and hence the statement of Theorem \ref{main1} makes sense.

We further introduce a {\it naive} $p$-adic analogue of the zeta Mahler measure as the formal power series whose Taylor coefficients are $m_{p, k}(f)$, i.e.,
\begin{align*}
Z_p(X, f)=\sum_{k=0}^\infty \frac{m_{p, k}(f)}{k!}X^k \in \mathbb{C}_p[[X]].
\end{align*}
In this article, we call $Z_p(X, f)$ the {\it $p$-adic zeta Mahler measure} of $f$. In Section 4, we verify that $p$-adic zeta Mahler measures converge on the closed unit disc under certain conditions and prove the following theorems, which are $p$-adic analogues of Theorem \ref{Ak1} and Theorem \ref{Ak2}.

\begin{thm}\label{main2}
Let $K$ be a finite extension of $\mathbb{Q}_p$ (inside $\mathbb{C}_p$) of ramified index $e <p-1$ and $\alpha, \beta \in K$ with $0<|\beta|<1<|\alpha|$. Then, for any $s \in \mathbb{C}_p$ with $|s|\leq 1$, we have
\begin{align*}
Z_p(s, (t-\alpha)(t-\beta))=\langle \alpha \rangle^s {}_2F_1\left(-s, -s ; 1 ; \frac{\beta}{\alpha} \right),
\end{align*}
where $\langle a \rangle^s$ is given by
\begin{align*}
\langle a \rangle^s=\sum_{n=0}^\infty \binom{s}{n} (\langle a \rangle-1)^n
\end{align*}
with
\begin{align*}
\binom{s}{n}=
\begin{cases}
\frac{s(s-1)\cdots (s-n+1)}{n!} & n \geq 1, \\
1 & n=0.
\end{cases}
\end{align*}
\end{thm}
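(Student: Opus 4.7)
The plan is to substitute the explicit formula from Theorem \ref{main1} for $m_{p,k}((t-\alpha)(t-\beta))$ directly into the power series defining $Z_p(s,f)$ and reorganize the resulting triple sum into a product of $\langle\alpha\rangle^s$ and a ${}_2F_1$. The decisive algebraic input will be the harmonic product identity $\sigma_{\mathbf{u}\ast\mathbf{v}}(M)=\sigma_{\mathbf{u}}(M)\sigma_{\mathbf{v}}(M)$, which makes the inner double sum over $(i,j)$ collapse into a square.

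Granting the convergence of $Z_p(s,f)$ on $|s|\le 1$ (established earlier in Section 4 using the bounds $|\log_p\alpha|\le p^{-1/e}$, $v_p(k!)\sim k/(p-1)$, and the hypothesis $e<p-1$, which together render the defining series absolutely convergent and freely rearrangeable), I would substitute Theorem \ref{main1} and reindex by $\ell=k-i-j$. Separating out the contribution of the $\log_p^k\alpha$ term yields
\begin{align*}
Z_p(s,f)=\Biggl(\sum_{\ell=0}^\infty\frac{(s\log_p\alpha)^\ell}{\ell!}\Biggr)\Biggl(1+\sum_{i,j\ge 1}(-s)^{i+j}\operatorname{Li}_{(\{1\}^{i-1}\ast\{1\}^{j-1},2)}(\beta/\alpha)\Biggr),
\end{align*}
and the first factor matches $\exp_p(s\log_p\langle\alpha\rangle)=\langle\alpha\rangle^s$ by comparing series expansions.

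To handle the second factor, I would write each multiple polylogarithm as $\operatorname{Li}_{(\mathbf{w},2)}(z)=\sum_{N\ge 1}z^N\sigma_{\mathbf{w}}(N-1)/N^2$, where $\sigma_{\mathbf{w}}(M)=\sum_{0<m_1<\cdots<m_r\le M}1/(m_1^{w_1}\cdots m_r^{w_r})$ is the truncated multiple harmonic sum. The harmonic product identity then factors the inner $(i,j)$-sum as the square of $\sum_{k\ge 0}(-s)^k\sigma_{\{1\}^k}(N-1)=\prod_{m=1}^{N-1}(1-s/m)$. A short algebraic manipulation using $\prod_{m=1}^{N-1}(1-s/m)=(-1)^{N-1}(N/s)\binom{s}{N}$ together with $(-s)_N/N!=(-1)^N\binom{s}{N}$ identifies the second factor as $\sum_{N\ge 0}\binom{s}{N}^2(\beta/\alpha)^N={}_2F_1(-s,-s;1;\beta/\alpha)$, completing the proof.

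The main obstacle is not the algebra but the $p$-adic bookkeeping: the interchange of the three summations must be justified in the $\mathbb{C}_p$-norm, where the hypothesis $e<p-1$ enters precisely to control $\exp_p(s\log_p\alpha)$ and the Mahler-series representation of $\langle\alpha\rangle^s$ simultaneously on the closed unit disc. Once convergence is secured, the rest is a formal computation resting on the harmonic product and the binomial identity above.
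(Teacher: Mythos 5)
Your argument is correct, but it takes a genuinely different route from the paper. The paper does not touch Theorem \ref{main1} at all: it first proves (Proposition \ref{exp rep}) that $Z_p(s,f)$ equals the Shnirelman integral of $\exp(s\log_p f)$, then factors $\exp(s\log_p(\zeta-\alpha)(\zeta-\beta))=\langle\alpha\rangle^s(1-\zeta/\alpha)^s(1-\beta/\zeta)^s$ and reads off the constant coefficient of the product of the two binomial series via Lemma \ref{lemBD1}(2), which immediately gives $\sum_N\binom{s}{N}^2(\beta/\alpha)^N$. You instead resum the closed formula of Theorem \ref{main1}, using $\sigma_{\mathbf u\ast\mathbf v}=\sigma_{\mathbf u}\sigma_{\mathbf v}$ and the generating function $\sum_k(-s)^k\sigma_{\{1\}^k}(N-1)=\prod_{m=1}^{N-1}(1-s/m)$; your algebra (including the normalization $(-s)\prod_{m<N}(1-s/m)=(-1)^N N\binom{s}{N}$, which squares against the $1/N^2$ to give $\binom{s}{N}^2$) checks out. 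What the paper's route buys is that all convergence issues are localized in Proposition \ref{exp rep} and the single Gauss-norm estimate $|\binom{s}{n}\alpha^{-n}|\le p^{n(1/(p-1)-1/e)}\to0$; what yours buys is an explicit verification that Theorem \ref{main2} is the generating-function repackaging of Theorem \ref{main1}. One caution on your convergence remark: the hypothesis $e<p-1$ is needed not only for $\exp(s\log_p\langle\alpha\rangle)$ but, more delicately, for the summability of your triple sum, since the truncated harmonic sums grow like $|\sigma_{\{1\}^{i-1}}(N-1)|\le p^{(N-1)/(p-1)}$ and appear squared; the interchange is saved by the fact that \emph{both} $|\beta|$ and $|\alpha|^{-1}$ are at most $p^{-1/e}$, so $|\beta/\alpha|\le p^{-2/e}<p^{-2/(p-1)}$, which dominates the square of that growth. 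A complete write-up of your proof would need to make this estimate explicit.
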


\begin{thm}\label{main3}
Let $K$ be a finite extension of $\mathbb{Q}_p$ (inside $\mathbb{C}_p$) of ramified index $e <p-1$ and $c \in K$ with $|c|>1$. Put $f(t_1, t_2)=t_1+t_1^{-1}+t_2+t_2^{-1}+c$. Then, for any $s \in \mathbb{C}_p$ with $|s|\leq 1$, we have
\begin{align*}
Z_p(s, f)=\langle c \rangle^s {}_3F_2\left(\frac{1}{2}, -\frac{s}{2}, \frac{1-s}{2} ; 1, 1 ; \frac{16}{c^2} \right).
\end{align*}
\end{thm}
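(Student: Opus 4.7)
The approach is to adapt the strategy of the complex case (Theorem \ref{Ak2}) and of Theorem \ref{main2} to the $p$-adic setting, substituting the torus integral by the averaging $\tfrac{1}{N^2}\sum_{\zeta\in\mu_N^2}$ and passing to the limit $N\to\infty$ along $(N,p)=1$. Write $g(t_1,t_2)=t_1+t_1^{-1}+t_2+t_2^{-1}$, so that $f=c(1+g/c)$. For any $\zeta=(\zeta_1,\zeta_2)\in\mu_N^2$ with $(N,p)=1$, the extension $K(\zeta_1,\zeta_2)/K$ is unramified, so the ramification hypothesis $e<p-1$ is inherited; and since $|g(\zeta)|\leq 1<|c|$, the factor $1+g(\zeta)/c$ lies in $U_0$. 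Multiplicativity of the decomposition $\mathbb{C}_p^{\times}\simeq p^{\mathbb{Q}}\times\mu_{ur}\times U_0$ then gives $\langle f(\zeta)\rangle=\langle c\rangle\,(1+g(\zeta)/c)$.

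Granting the convergence results to be established in Section 4, I would rewrite $Z_p(s,f)=\lim_N\tfrac{1}{N^2}\sum_\zeta\langle f(\zeta)\rangle^s$, where $\langle\cdot\rangle^s$ is the binomial series from Theorem \ref{main2}. The bound $|\langle a\rangle-1|<p^{-1/(p-1)}$ forced by $e<p-1$ supplies convergence of $\langle a\rangle^s$ for $|s|\leq 1$, the multiplicativity $\langle ab\rangle^s=\langle a\rangle^s\langle b\rangle^s$ on $U_0$, and the identity $\sum_k\tfrac{s^k}{k!}\log_p^k a=\langle a\rangle^s$ needed for the rewriting. Applying multiplicativity and expanding the inner binomial series in $n$ (interchanging with the finite sum over $\zeta$) gives
\begin{align*}
Z_p(s,f)=\langle c\rangle^s\sum_{n=0}^\infty\binom{s}{n}c^{-n}\lim_{\substack{N\to\infty\\ (N,p)=1}}\frac{1}{N^2}\sum_{\zeta\in\mu_N^2}g(\zeta)^n.
\end{align*}

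Next, expanding $g(\zeta)^n$ by the multinomial theorem into monomials $\zeta_1^{a_1-a_2}\zeta_2^{b_1-b_2}$ and invoking orthogonality of characters on $\mu_N$ kills every contribution with $a_1\neq a_2$ or $b_1\neq b_2$ after the limit; odd $n$ therefore vanish, and for $n=2m$ the limit equals $\sum_{a+b=m}\tfrac{(2m)!}{(a!)^2(b!)^2}=\binom{2m}{m}^2$ by Vandermonde. The remaining task is the closed-form identity
\begin{align*}
\sum_{m=0}^\infty\binom{s}{2m}\binom{2m}{m}^2\left(\frac{1}{c^2}\right)^m={}_3F_2\!\left(\tfrac12,-\tfrac{s}{2},\tfrac{1-s}{2};1,1;\tfrac{16}{c^2}\right),
\end{align*}
which I would verify termwise using $\binom{s}{2m}=4^m(-s/2)_m((1-s)/2)_m/(2m)!$ (split $(-s)_{2m}$ into even and odd indices) together with the Legendre duplication $(2m)!=4^m m!\,(1/2)_m$; these yield the $m$-th coefficient $(1/2)_m(-s/2)_m((1-s)/2)_m/(m!)^3\cdot(16/c^2)^m$, matching the ${}_3F_2$.

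The principal obstacle is analytic rather than combinatorial: justifying the identification $Z_p(s,f)=\lim_N\tfrac{1}{N^2}\sum_\zeta\langle f(\zeta)\rangle^s$, together with the subsequent interchange of the binomial expansion in $n$ with $\lim_N$ and $\sum_\zeta$. Both depend on uniform $p$-adic estimates made available by the hypothesis $e<p-1$, which I expect to be supplied by the general convergence machinery developed in Section 4.
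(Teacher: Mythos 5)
Your proposal is correct and follows essentially the same route as the paper: write $f=c(1+g/c)$ with $g/c\in\mathfrak{m}_p[t_1^{\pm1},t_2^{\pm1}]$, use the exponential representation of $Z_p(s,f)$ from Section 4 to reduce to $\langle c\rangle^s$ times the Shnirelman integral of $(1+g/c)^s$, expand binomially, extract the constant Fourier coefficient $\binom{2m}{m}^2$ (the paper cites Lemma \ref{lemBD1}(2) where you invoke orthogonality on $\mu_N$, and Lemma \ref{ele lem} for the interchange you flag as the main analytic point), and finish with the identities $(-s)_{2m}=4^m(-s/2)_m((1-s)/2)_m$ and $(2m)!=4^m m!\,(1/2)_m$. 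The combinatorial and $p$-adic details you supply all match the paper's argument.
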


Kurokawa, Lal\'{\i}n, Ochiai and Akatsuka considered and computed more examples of higher Mahler measures and zeta Mahler measures, containing the case $0 \leq c \leq 2$ of Theorem \ref{Ak1}. However, we may not obtain analogues of these results as long as we use the limit $(\ref{deflim m_p})$ for the definition of the $p$-adic (higher) Mahler measures. This is because the limit $(\ref{deflim m_p})$ does not exist if a Laurent polynomial $f \in \mathbb{C}_p[t_1^{\pm 1}, \dots ,t_n^{\pm 1}]$ has a zero of $p$-adic absolute value one (see the remark written below Proposition 1.3 in \cite{BD99}). This also explains our assumptions about the absolute values of coefficients in our results.

\vspace{10pt}
\noindent
\textsc{Notation:}~ In this paper, let $p$ be a prime and $\mathbb{C}_p$ be the completion of $\overline{\mathbb{Q}_p}$ with the $p$-adic norm $|\cdot |$ normalized by $|p|=p^{-1}$. We denote by $\mathfrak{m}_p$ the maximal ideal of the ring of integers in $\mathbb{C}_p$. For a positive constant $r$, we define 
\begin{align*}
D(r^{-})&=\{ z \in \mathbb{C}_p \mid |z|<r \}, \\
D(r)&=\{ z \in \mathbb{C}_p \mid |z|\leq r \}.
\end{align*}

\vspace{10pt}
\noindent
\textsc{Acknowledgment:}~ Contents in this article are partially based on the author's master thesis in Tohoku University. The author is grateful to his supervisor Professor Takao Yamazaki for his advice and helpful comments. The author would like to thank Hiroyuki Ochiai and Satoshi Kumabe for their helpful discussions and comments. The author also thanks Naho Kawasaki for several comments on the contents in Section 3. 
This work is supported by JSPS Grant-in-Aid for Transformative Research Areas (A) (22H05107).

\section{$p$-adic higher Mahler measures}

In this section, we show the existence of $p$-adic higher Mahler measures and verify some properties. The contents in this section are mainly based on \cite[Section 1]{BD99}.

\begin{dfn}[{\cite[Section 1]{BD99}}]
For a function $f : \mu_{ur}^n \rightarrow \mathbb{C}_p$, if the limit
\begin{align}\label{Sch int}
\int_{T_p^n} f(z_1,\cdots,z_n) \frac{dz_1}{z_1} \cdots \frac{dz_n}{z_n} \coloneqq \lim_{\substack{N \to \infty \\ (N,p)=1}} \frac{1}{N^n} \sum_{\zeta \in \mu_N^n} f(\zeta)
\end{align}
exists, we call it the Shnirelman integral of $f$ and say $f$ is (Shnirelman) integrable.
\end{dfn}

We have the following properties of the Shnirelman integrals.
\begin{lem}[{\cite[Lemma 1.1]{BD99}}]\label{lemBD1}
\begin{enumerate}
\item If the limit in $(\ref{Sch int})$ exists, then we have
\begin{align*}
\left|\int_{T_p^n} f(z_1,\cdots,z_n) \frac{dz_1}{z_1} \cdots \frac{dz_n}{z_n} \right| \leq \sup_{\zeta \in \mu_{ur}^n} \{|f(\zeta)|\}.
\end{align*}
\item Define an affinoid algebra $\mathcal{R}_n(\mathbb{C}_p)$ over $\mathbb{C}_p$ by
\begin{align*}
\mathcal{R}_n(\mathbb{C}_p) \coloneqq \left\{\sum_{v \in \mathbb{Z}^n} a_v t_1^{v_1}\cdots t_n^{v_n} \mid a_v \in \mathbb{C}_p \ \text{with} \ |a_v| \to 0 \ \text{as} \ |v_1|_\infty + \cdots +|v_n|_\infty \to \infty \right\}.
\end{align*}
Then each $f=\sum_{v \in \mathbb{Z}^n} a_v t_1^{v_1}\cdots t_n^{v_n} \in \mathcal{R}(\mathbb{C}_p)$ defines a well-defined map from $\mu_{ur}^n$ to $\mathbb{C}_p$ and we have
\begin{align*}
\int_{T_p^n} f(z_1,\cdots,z_n) \frac{dz_1}{z_1} \cdots \frac{dz_n}{z_n} =a_0.
\end{align*}
\end{enumerate}
\end{lem}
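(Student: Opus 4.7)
The plan is as follows. For part (1), I would observe that the hypothesis $(N,p)=1$ forces $|N|=1$, so the prefactor $1/N^n$ has $p$-adic absolute value one. The ultrametric inequality then yields
\begin{align*}
\left|\frac{1}{N^n}\sum_{\zeta \in \mu_N^n} f(\zeta)\right| \leq \max_{\zeta \in \mu_N^n}|f(\zeta)| \leq \sup_{\zeta \in \mu_{ur}^n}|f(\zeta)|
\end{align*}
for every such $N$. Since the limit on the left-hand side exists by assumption and $|\cdot|$ is continuous on $\mathbb{C}_p$, taking $N \to \infty$ preserves this bound.

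For part (2), the argument splits naturally into (a) convergence of the series at each $\zeta \in \mu_{ur}^n$ and (b) evaluation of the integral. For (a), since every $\zeta_i \in \mu_{ur}$ satisfies $|\zeta_i|=1$, the general term of $\sum_v a_v \zeta_1^{v_1}\cdots\zeta_n^{v_n}$ has absolute value $|a_v|$, which tends to zero as $|v_1|_\infty+\cdots+|v_n|_\infty \to \infty$ by definition of $\mathcal{R}_n(\mathbb{C}_p)$; hence the series converges in the complete field $\mathbb{C}_p$ to a well-defined element $f(\zeta)$.

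For (b), the key ingredient is the orthogonality relation
\begin{align*}
\frac{1}{N^n}\sum_{\zeta \in \mu_N^n}\zeta_1^{v_1}\cdots \zeta_n^{v_n} = \prod_{i=1}^n \frac{1}{N}\sum_{\zeta \in \mu_N}\zeta^{v_i},
\end{align*}
which equals $1$ if $N$ divides every $v_i$ and $0$ otherwise. Given $\varepsilon>0$, I would choose $M$ so large that $|a_v|<\varepsilon$ whenever $|v_1|_\infty+\cdots+|v_n|_\infty>M$, and decompose $f = f_M + g_M$ with $f_M=\sum_{|v_1|_\infty+\cdots+|v_n|_\infty\leq M}a_v t_1^{v_1}\cdots t_n^{v_n}$. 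For $N>M$ coprime to $p$, the orthogonality kills every non-constant monomial of the Laurent polynomial $f_M$, so the averaged sum of $f_M$ equals $a_0$. The tail $g_M$ is controlled by the same ultrametric estimate used in part (1), applied before passing to the limit, giving a contribution of size at most $\sup_{|v_1|_\infty+\cdots+|v_n|_\infty>M}|a_v|<\varepsilon$.

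The only mildly delicate point is justifying the interchange of the finite average with the infinite sum defining $f(\zeta)$, but this is automatic once one notes that the tail bound is uniform in $\zeta \in \mu_{ur}^n$, precisely because $|\zeta_i|=1$ makes $|a_v \zeta^v|=|a_v|$ independent of $\zeta$. Accordingly, no step should present a serious obstacle; the entire argument is the standard $p$-adic analogue of character orthogonality on a torus.
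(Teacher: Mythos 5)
Your argument is correct. The paper gives no proof of this lemma---it is quoted verbatim from \cite{BD99}---and your reasoning (the ultrametric bound on the averages, using $|N|=1$ since $(N,p)=1$, for part (1); orthogonality of roots of unity applied to a finite truncation plus a uniform tail estimate for part (2)) is exactly the standard argument behind the cited result, with all the delicate points (uniformity of the tail bound in $\zeta$, interchange of the finite average with the convergent series) correctly handled.
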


The proofs of following two lemmas are elementary and hence omitted.

\begin{lem}\label{ele lem}
Suppose that a family $\{f_m : \mu_{ur}^n \rightarrow \mathbb{C}_p\}_m$ of functions converges uniformly to a function $f : \mu_{ur}^n \rightarrow \mathbb{C}_p$.
\begin{enumerate}
\item If $f_m$ is integrable for any $m \geq 1$, then the limit 
\begin{align*}
\lim_{m \to \infty} \int_{T_p^n} f_m(z_1,\cdots,z_n) \frac{dz_1}{z_1}\cdots \frac{dz_n}{z_n}
\end{align*}
exists and we have
\begin{align*}
\lim_{m \to \infty} \int_{T_p^n} f_m(z_1,\cdots,z_n) \frac{dz_1}{z_1}\cdots \frac{dz_n}{z_n}=\int_{T_p^n} f(z_1,\cdots,z_n) \frac{dz_1}{z_1}\cdots \frac{dz_n}{z_n}.
\end{align*}
\item If there exists a constant $C>0$ such that $|f_m(z)|\leq C$ for any $m \geq 1$ and $z \in \mu_{ur}^n$, then $\{f_m^j\}_m$ converges uniformly to $f^j$ for each $j \geq 0$.
\end{enumerate}
\end{lem}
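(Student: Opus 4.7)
The plan is to prove both parts by direct estimates, relying on Lemma~\ref{lemBD1}(1), which says that Shnirelman integration is contractive with respect to the supremum norm on $\mu_{ur}^n$. For part (1), I would first check that the sequence of integrals $I_m \coloneqq \int_{T_p^n} f_m(z_1,\dots,z_n)\, \frac{dz_1}{z_1}\cdots\frac{dz_n}{z_n}$ is Cauchy in $\mathbb{C}_p$: given $\varepsilon>0$, uniform convergence supplies $M$ with $\sup_{\zeta \in \mu_{ur}^n} |f_m(\zeta) - f_{m'}(\zeta)| < \varepsilon$ for $m, m' \geq M$, and since $f_m - f_{m'}$ is integrable (as a difference of integrables), Lemma~\ref{lemBD1}(1) immediately yields $|I_m - I_{m'}| \leq \varepsilon$. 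To identify the limit with $\int_{T_p^n} f$, I would show separately that $f$ itself is integrable: for a fixed large $m$, decompose
\begin{align*}
\frac{1}{N^n}\sum_{\zeta \in \mu_N^n} f(\zeta) = \frac{1}{N^n}\sum_{\zeta \in \mu_N^n} f_m(\zeta) + \frac{1}{N^n}\sum_{\zeta \in \mu_N^n} \bigl(f(\zeta) - f_m(\zeta)\bigr);
\end{align*}
the second term is bounded by $\sup|f - f_m|$, while the first is Cauchy in $N$ with $(N,p)=1$ by integrability of $f_m$. Hence the partial sums for $f$ are Cauchy. The same sup-norm estimate finally gives $|\int_{T_p^n} f - I_m| \leq \sup|f - f_m| \to 0$.

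For part (2), I would use the algebraic factorization
\begin{align*}
f_m^j(z) - f^j(z) = \bigl(f_m(z) - f(z)\bigr) \sum_{i=0}^{j-1} f_m(z)^{j-1-i}\, f(z)^i.
\end{align*}
The pointwise bound $|f_m(z)| \leq C$ persists in the limit, so $|f(z)| \leq C$ on $\mu_{ur}^n$. The non-archimedean triangle inequality then bounds the inner sum by $C^{j-1}$, whence
\begin{align*}
|f_m^j(z) - f^j(z)| \leq C^{j-1}\, |f_m(z) - f(z)|,
\end{align*}
which tends to $0$ uniformly in $z$.

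Both arguments are genuinely routine; the only point requiring even a moment's care is the implication $|f_m| \leq C \Rightarrow |f| \leq C$, and this is immediate from pointwise convergence. I anticipate no real obstacle. The purpose of the lemma, judging from the layout of Section~2, is that parts (1) and (2) together justify swapping a limit with the Shnirelman integral when approximating $\log_p^k f$ by polynomials, which is precisely what is needed to establish existence of $m_{p,k}(f)$.
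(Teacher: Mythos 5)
Your proof is correct; the paper itself omits the argument, stating only that it is elementary, and your write-up supplies exactly the expected routine: the ultrametric contractivity of the Shnirelman integral (Lemma~\ref{lemBD1}(1)) plus a three-$\varepsilon$ argument for part (1), and the telescoping factorization with the strong triangle inequality for part (2). No gaps.
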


\begin{lem}\label{log int lem}
Suppose that a function $f : \mu_{ur}^n \rightarrow \mathbb{C}_p$ is integrable. Then, for any $1 \leq i \leq n$, the function $f(z_1,\cdots,z_n) \log_p z_i$ is integrable and the integral is $0$.
\end{lem}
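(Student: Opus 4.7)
The plan is to observe that the hypothesis hides a triviality: on the integration domain $\mu_{ur}^n$, the function $\log_p z_i$ vanishes identically. Recall the decomposition $\mathbb{C}_p^\times \simeq p^{\mathbb{Q}} \times \mu_{ur} \times U_0$ used in the definition of $\log_p$, where $\mu_{ur}$ is the component of roots of unity of order prime to $p$. For $\zeta \in \mu_{ur}$, the $U_0$-component is $\langle \zeta \rangle = 1$, so the defining series
\begin{align*}
\log_p \zeta = \sum_{n=1}^\infty \frac{(-1)^{n-1}}{n}(\langle \zeta \rangle - 1)^n
\end{align*}
collapses to $0$.

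Applying this to the $i$-th coordinate, I would note that for every $\zeta = (\zeta_1,\dots,\zeta_n) \in \mu_{ur}^n$ the value $f(\zeta) \log_p \zeta_i$ equals $0$. In particular, for every integer $N \geq 1$ with $(N,p)=1$, every $\zeta \in \mu_N^n$ satisfies $\zeta_i \in \mu_N \subset \mu_{ur}$, hence
\begin{align*}
\frac{1}{N^n} \sum_{\zeta \in \mu_N^n} f(\zeta) \log_p \zeta_i = 0.
\end{align*}

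Taking the limit as $N \to \infty$ with $(N,p) = 1$ shows both that the Shnirelman integral of $f(z_1,\dots,z_n) \log_p z_i$ exists and that its value is $0$, which is precisely the claim. Since the entire argument reduces to a single observation about the domain of integration, there is no real obstacle; the only thing to flag is that the lemma is a genuine feature of the $p$-adic theory (in sharp contrast with the archimedean setup where $\log|z|_\infty$ need not vanish on the torus) and that it will be used to discard $\log_p$-factors coming from roots of unity in later computations.
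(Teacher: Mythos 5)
Your proof is correct: the whole content of the lemma is that $\log_p$ annihilates every root of unity of order prime to $p$ (equivalently, $N\log_p\zeta=\log_p\zeta^N=\log_p 1=0$ for $\zeta\in\mu_N$ with $(N,p)=1$), so each Riemann sum $\frac{1}{N^n}\sum_{\zeta\in\mu_N^n}f(\zeta)\log_p\zeta_i$ vanishes identically and the Shnirelman integral exists and equals $0$. The paper omits the proof as elementary, and your argument is exactly the intended one; your closing remark that the integrability hypothesis on $f$ is not actually needed, and that this is a genuinely non-archimedean phenomenon, is also accurate.
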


The following lemma is well known in rigid analysis and plays an important role to consider $p$-adic Mahler measures.

\begin{lem}[{\cite[Lemma 1.2]{BD99}}]
Let $f \in \mathbb{C}_p[t_1^{\pm 1}, \dots ,t_n^{\pm 1}]$. Then $f$ does not vanish in any point of $T_p^n$ if and only if we have
\begin{align}\label{lem 1.2}
f(t_1, \cdots, t_n)=at_1^{l_1}\cdots t_n^{l_n}(1+g(t_1, \cdots, t_n))
\end{align}
for some $a \in \mathbb{C}_p^{\times}, (l_1, \cdots, l_n) \in \mathbb{Z}^n$ and $g \in \mathfrak{m}_p[t_1^{\pm 1}, \dots ,t_n^{\pm 1}]$.
\end{lem}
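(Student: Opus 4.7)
The plan is to prove the two implications separately: the forward (``if'') direction is an immediate calculation, and the converse is the substantive part. For the forward direction, suppose $f = a t_1^{l_1}\cdots t_n^{l_n}(1+g)$ as in $(\ref{lem 1.2})$ and let $(z_1,\dots,z_n) \in T_p^n$ so that $|z_i|=1$ for each $i$. Since every coefficient of $g$ lies in $\mathfrak{m}_p$, the strong triangle inequality gives $|g(z_1,\dots,z_n)| < 1$, hence $|1+g(z_1,\dots,z_n)|=1$. Therefore $|f(z_1,\dots,z_n)| = |a| > 0$, and $f$ has no zero on $T_p^n$.

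For the converse, I would first multiply $f$ by a monomial $t_1^{-l_1}\cdots t_n^{-l_n}$ so that the result lies in $\mathbb{C}_p[t_1,\dots,t_n]$ with nonzero constant term; this adjustment is harmless since monomials never vanish on $T_p^n$. Next, dividing by an element $a \in \mathbb{C}_p^{\times}$ with $|a|$ equal to the Gauss norm $\max_v |a_v|$ yields a polynomial $\tilde f$ with all coefficients of absolute value $\leq 1$ and at least one coefficient a unit, so that the reduction $\overline{\tilde f} \in \overline{\mathbb F}_p[t_1,\dots,t_n]$ is nonzero. The crux is then to show that $\overline{\tilde f}$ is a single monomial: once this is established, lifting produces $\tilde f = c\, t^{l'}(1+g)$ with $c$ an integral unit and $g \in \mathfrak{m}_p[t_1^{\pm},\dots,t_n^{\pm}]$, and undoing the two normalizations yields the asserted form.

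To show $\overline{\tilde f}$ is a monomial, I would argue by contradiction. A combinatorial induction on $n$, with base case $n=1$ handled by the Newton polygon of $\tilde f$ (two or more nonzero terms in $\overline{\tilde f}$ force a horizontal segment, yielding roots of absolute value $1$ and hence zeros on $T_p^1$, contradicting non-vanishing), produces a point $\overline{\zeta} \in (\overline{\mathbb F}_p^{\times})^n$ at which $\overline{\tilde f}$ vanishes whenever $\overline{\tilde f}$ is not a monomial. The main obstacle I anticipate is upgrading this mod-$\mathfrak{m}_p$ zero to an actual zero of $\tilde f$ on $T_p^n$ in the multivariate setting, since a direct lift of $\overline{\zeta}$ only gives $|\tilde f(\zeta)|<1$ rather than $\tilde f(\zeta)=0$. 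When $\overline{\tilde f}$ involves at least two distinct powers of $t_n$, one can specialize $t_1,\dots,t_{n-1}$ to $\mu_{ur}^{n-1}$-values at which both coefficients remain of absolute value $1$, reducing to the one-variable case. The remaining degenerate case is handled most efficiently by invoking the standard rigid-analytic theorem that non-vanishing Laurent polynomials on the torus are units in the Tate algebra $\mathbb{C}_p\langle t_1^{\pm},\dots,t_n^{\pm}\rangle$, whose units are precisely those of the form in $(\ref{lem 1.2})$; this is the route taken in \cite{BD99}.
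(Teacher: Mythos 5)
The paper does not actually prove this lemma: it is quoted from \cite[Lemma 1.2]{BD99} with the remark that it is ``well known in rigid analysis,'' and the intended argument is precisely the rigid-analytic one you name at the very end (no zeros on the torus implies $f$ is a unit in the affinoid algebra $\mathcal{R}_n(\mathbb{C}_p)$ by the affinoid Nullstellensatz, and the units of that algebra are exactly the elements $at_1^{l_1}\cdots t_n^{l_n}(1+g)$ with $|g|_{\text{Gauss}}<1$). So your forward direction, which is complete and correct, together with your Newton-polygon/specialization argument for the converse, constitutes a genuinely different and more self-contained route. Two remarks on the converse. First, the normalization ``multiply by a monomial so that the result is a polynomial with nonzero constant term'' is not always achievable in several variables (consider $t_1+t_2$); fortunately you never use the constant-term condition, only integrality of the coefficients and unit Gauss norm, so this is harmless. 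Second, and more substantively, handling the degenerate case (only one power of $t_n$ occurring in $\overline{\tilde f}$) by ``invoking the standard rigid-analytic theorem'' undercuts the elementary argument, since that theorem already contains the entire lemma. This case in fact closes elementarily by induction on $n$: specialize $t_n$ to any $\zeta\in\mu_{ur}$; then $\tilde f(t_1,\dots,t_{n-1},\zeta)$ still has no zeros on $T_p^{n-1}$, yet its reduction equals $\overline{\zeta}^{\,d}\,\overline{h}(t_1,\dots,t_{n-1})$, which is not a monomial, contradicting the inductive hypothesis. With that replacement (and with the explicit observation that a nonzero Laurent polynomial over $\overline{\mathbb{F}}_p$ takes a nonzero value somewhere on $(\overline{\mathbb{F}}_p^{\times})^{n-1}$, which you use implicitly when choosing the specialization point for the main case), your proof is complete and entirely elementary, which is more than the paper's citation provides.
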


Combining above lemmas, we can check the existence of $p$-adic higher Mahler measures as follows.

\begin{prop}[cf. {\cite[Proposition 1.3]{BD99}}]\label{exist pHMM}
Suppose that $f \in \mathbb{C}_p[t_1^{\pm 1}, \dots ,t_n^{\pm 1}]$ does not vanish in any point of $T_p^n$ and write $f$ as in $(\ref{lem 1.2})$. Then, for each $k \geq 1$, the Shnirelman integral 
\begin{align*}
m_{p,k}(f) \coloneqq \int_{T_p^n} \log_p^k f(z_1, \cdots, z_n) \frac{dz_1}{z_1}\cdots\frac{dz_n}{z_n}
\end{align*}
exists and is given by
\begin{align*}
m_{p,k}(f)=\sum_{i=0}^k \binom{k}{i} \left(\log_p^i a\right)b_{k-i},
\end{align*}
where $b_j=\lim_{M \to \infty} [\{\sum_{m=1}^M (-1)^{m+1}g(z_1, \cdots, z_n)^m/m\}^j]_0$ and $[\cdot]_0$ means the $0$-th coefficient.
\end{prop}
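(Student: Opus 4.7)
The plan is to take $p$-adic logarithms in the factorization (2.2) and reduce the problem to computing the Shnirelman integral of $(\log_p(1+g))^j$ for $j \geq 0$. Since $f$ takes values in $\mathbb{C}_p^\times$ on $\mu_{ur}^n$ by Lemma~\ref{exist pHMM} and since the coefficients of $g$ lie in $\mathfrak{m}_p$ so that $|g(\zeta)|<1$ uniformly on $\mu_{ur}^n$ (verified below), I can write
\[
\log_p f(z_1,\ldots,z_n) = \log_p a + \sum_{i=1}^n l_i \log_p z_i + \log_p(1 + g(z_1,\ldots,z_n))
\]
on $\mu_{ur}^n$. Raising both sides to the $k$-th power and expanding multinomially, every summand carrying at least one factor $\log_p z_i$ integrates to zero by iterated application of Lemma~\ref{log int lem}; the surviving contributions are exactly those from the binomial expansion $(\log_p a + \log_p(1+g))^k = \sum_{i=0}^k \binom{k}{i}(\log_p a)^i(\log_p(1+g))^{k-i}$. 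Hence the proposition will follow once I establish
\[
\int_{T_p^n} (\log_p(1+g))^j \, \frac{dz_1}{z_1}\cdots\frac{dz_n}{z_n} = b_j \qquad (0 \leq j \leq k),
\]
so that collecting the scalars $\log_p^i a$ yields the stated formula.

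For this reduced identity, put $h_M = \sum_{m=1}^M (-1)^{m+1} g^m/m \in \mathbb{C}_p[t_1^{\pm 1},\ldots,t_n^{\pm 1}] \subset \mathcal{R}_n(\mathbb{C}_p)$, so that each $h_M^j$ is a Laurent polynomial whose Shnirelman integral equals its constant term $[h_M^j]_0$ by Lemma~\ref{lemBD1}(2). Because $g$ is a \emph{finite} Laurent polynomial with coefficients in $\mathfrak{m}_p$, the maximum $c$ of the absolute values of its coefficients satisfies $c<1$, and the non-Archimedean estimate together with $|\zeta_i|=1$ gives $|g(\zeta)| \leq c < 1$ for every $\zeta \in \mu_{ur}^n$. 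Consequently $h_M(\zeta) \to \log_p(1+g(\zeta))$ converges uniformly in $\zeta$, and the sequence $\{h_M\}_M$ is uniformly bounded on $\mu_{ur}^n$, since $|g(\zeta)^m/m| \leq c^m p^{v_p(m)} \leq c^m m \to 0$. Lemma~\ref{ele lem}(2) then upgrades this to uniform convergence $h_M^j \to (\log_p(1+g))^j$, and Lemma~\ref{ele lem}(1) permits passage to the limit:
\[
\int_{T_p^n}(\log_p(1+g))^j \, \frac{dz_1}{z_1}\cdots\frac{dz_n}{z_n} = \lim_{M\to\infty}\int_{T_p^n} h_M^j \, \frac{dz_1}{z_1}\cdots\frac{dz_n}{z_n} = \lim_{M\to\infty}[h_M^j]_0 = b_j.
\]
In particular, this argument simultaneously confirms the existence of the limit defining $b_j$.

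The main obstacle is securing the uniform bound on $\{h_M\}$: the $p$-adic denominators $1/m$ in the logarithm series can have arbitrarily large absolute value, and the estimate $|1/m| = p^{v_p(m)} \leq m$ together with the uniform bound $|g(\zeta)|\leq c<1$ is precisely what forces $|g(\zeta)^m/m|$ to tend to zero uniformly on $\mu_{ur}^n$. Once this ultrametric bookkeeping is secured, the remainder of the argument is a routine application of Lemmas~\ref{lemBD1}, \ref{ele lem} and \ref{log int lem}.
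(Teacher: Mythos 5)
Your argument is correct and follows essentially the same route as the paper: the same decomposition of $\log_p f$ via the factorization $(\ref{lem 1.2})$, elimination of the $\log_p z_i$ cross-terms, and passage to the limit $h_M^j \to \log_p^j(1+g)$ using Lemmas \ref{lemBD1}, \ref{ele lem} and \ref{log int lem}; you merely make the uniform bound $|g(\zeta)^m/m|\leq c^m m$ explicit where the paper leaves it implicit. (The reference to ``Lemma~\ref{exist pHMM}'' for the factorization should point to the lemma quoted from \cite{BD99}, not to the proposition itself.)
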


\begin{proof}
We compute
\begin{align*}
m_{p,k}(f)&=\int_{T_p^n} \left\{\log_p az_1^{l_1}\cdots z_n^{l_n}(1+g(z_1,\cdots,z_n))\right\}^k \frac{dz_1}{z_1}\cdots\frac{dz_n}{z_n} \\
&=\sum_{i=0}^k \binom{k}{i} \int_{T_p^n} \left(\log_p^i az_1^{l_1}\cdots z_n^{l_n}\right)\left(\log_p^{k-i} (1+g(z_1,\cdots,z_n))\right) \frac{dz_1}{z_1}\cdots\frac{dz_n}{z_n}
\end{align*}
and, for each $0 \leq i \leq n$, 
\begin{align*}
  &\int_{T_p^n} \left(\log_p^i az_1^{l_1}\cdots z_n^{l_n}\right)\left(\log_p^{k-i} (1+g(z_1,\cdots,z_n))\right) \frac{dz_1}{z_1}\cdots\frac{dz_n}{z_n}\\
=&\lim_{\substack{N \to \infty \\ (N,p)=1}}\frac{1}{N^n}\sum_{\zeta \in \mu_N}\left\{\sum_{\substack{i_0,\cdots,i_n \geq 0 \\ i_0+\cdots+i_n=i}}\frac{i!}{i_0!\cdots i_n!} \left(\log_p^{i_0} a\right) \prod_{j=1}^n\left(l_j \log_p \zeta_j\right)^{i_j}\right\} \times \\
 &\left(\log_p^{k-i}(1+g(\zeta_1,\cdots,\zeta_n))\right) \\
=&\lim_{\substack{N \to \infty \\ (N,p)=1}}\frac{1}{N^n}\sum_{\zeta \in \mu_N}\left(\log_p^i a\right)\left(\log_p^{k-i}(1+g(\zeta_1,\cdots,\zeta_n))\right) \\
=&\left(\log_p^i a\right)\int_{T_p^n} \log_p^{k-i} (1+g(z_1,\cdots,z_n)) \frac{dz_1}{z_1}\cdots\frac{dz_n}{z_n}.
\end{align*}
Since $\{\sum_{m=1}^M (-1)^{m+1}g(z_1, \cdots, z_n)^m/m\}_{M \geq 1}$ converges uniformly to $\log_p (1+g(z_1,\cdots,z_n))$ on $\mu_{ur}^n$, Lemma \ref{ele lem} implies that
\begin{align*}
&\int_{T_p^n} \log_p^{k-i} (1+g(z_1,\cdots,z_n)) \frac{dz_1}{z_1}\cdots\frac{dz_n}{z_n} \\
=&\lim_{M \to \infty} \int_{T_p^n} \left\{\sum_{m=1}^M \frac{(-1)^{m+1}}{m}g(z_1, \cdots, z_n)^m \right\}^{k-i} \frac{dz_1}{z_1}\cdots\frac{dz_n}{z_n}=b_{k-i}.
\end{align*}
This completes the proof.
\end{proof}

We call $m_{p, k}(f)$ the {\it $p$-adic $k$-higher Mahler measure} of $f$ and note that $m_{p, 1}(f)=m_p(f)$, which was introduced by Besser and Deninger. The following properties follow from Proposition \ref{exist pHMM}.

\begin{prop}
\begin{enumerate}
\item For any $(l_1, \cdots, l_n) \in \mathbb{Z}^n$ and any positive integer $k$, we have $m_{p, k}(t_1^{l_1}\cdots t_n^{l_n})=0$.
\item Suppose that $f \in \mathbb{C}_p[t_1^{\pm 1}, \dots ,t_n^{\pm 1}]$ does not vanish in any point of $T_p^n$. For any $n \times n$ matrix $S \in M_n(\mathbb{Z}) \cap {\rm GL}_n(\mathbb{Q})$, we have $m_{p, k}(f(t^S))=m_{p, k}(f(t))$.
\end{enumerate}
\end{prop}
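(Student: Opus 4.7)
For part (1), the plan is to exploit the fact that the $p$-adic logarithm annihilates $\mu_{ur}$. Indeed, any $\zeta \in \mu_{ur}$ has order prime to $p$, so $\langle \zeta \rangle = 1$ and hence $\log_p \zeta = 0$. Consequently, for every $\zeta \in \mu_{ur}^n$ one has $\log_p(\zeta_1^{l_1}\cdots\zeta_n^{l_n}) = \sum_{j=1}^n l_j \log_p \zeta_j = 0$, so the integrand $\log_p^k(z_1^{l_1}\cdots z_n^{l_n})$ vanishes identically on $\mu_{ur}^n$ for $k \geq 1$. Each partial sum in the Shnirelman limit is already zero, so the integral exists and equals $0$.

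For part (2), my approach is a change-of-variables argument on $\mu_N^n$. First I would verify that $f(t^S)$ satisfies the hypothesis of Proposition \ref{exist pHMM}: writing $f = a\, t^l (1+g)$ with $a \in \mathbb{C}_p^{\times}$ and $g \in \mathfrak{m}_p[t_1^{\pm 1},\dots,t_n^{\pm 1}]$ as in $(\ref{lem 1.2})$, substitution yields $f(t^S) = a\, t^{l'}(1 + g(t^S))$ for some $l' \in \mathbb{Z}^n$, and $g(t^S)$ remains in $\mathfrak{m}_p[t_1^{\pm 1},\dots,t_n^{\pm 1}]$ because $S$ has integer entries; hence $m_{p,k}(f(t^S))$ exists. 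The core observation is that, for any $N$ coprime to both $p$ and $\det S$, the monomial map $\mu_N^n \to \mu_N^n,\ \zeta \mapsto \zeta^S$, corresponds under $\mu_N^n \cong (\mathbb{Z}/N\mathbb{Z})^n$ to the linear map defined by $S$, which is a bijection because $\det S$ is invertible modulo $N$. Consequently
\[
\frac{1}{N^n}\sum_{\zeta \in \mu_N^n} \log_p^k f(\zeta^S) = \frac{1}{N^n}\sum_{\eta \in \mu_N^n} \log_p^k f(\eta),
\]
and letting $N \to \infty$ through integers coprime to $p\det S$ yields $m_{p,k}(f(t^S)) = m_{p,k}(f)$; passing to this subsequence is legitimate because the unrestricted Shnirelman limits on both sides are already known to exist.

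I do not anticipate any real obstacle: (1) is immediate from $\log_p|_{\mu_{ur}} = 0$, while (2) rests on the group-theoretic fact that the monomial map induced by $S$ is bijective on $\mu_N^n$ as soon as $\det S$ is invertible modulo $N$. The only mildly delicate point in (2) is to confirm that the substituted polynomial $f(t^S)$ still fits the standard form $(\ref{lem 1.2})$ so that Proposition \ref{exist pHMM} applies, which is routine from the integrality of $S$.
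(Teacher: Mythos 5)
Your proposal is correct. The paper gives no written proof, stating only that both assertions ``follow from Proposition \ref{exist pHMM}''; the intended route is therefore to read everything off the closed formula $m_{p,k}(f)=\sum_{i=0}^k\binom{k}{i}(\log_p^i a)\,b_{k-i}$. For (1) that formula gives $a=1$, $g=0$, hence $b_j=0$ for $j\geq 1$ and $m_{p,k}=b_k=0$; your direct argument that $\log_p$ kills $\mu_{ur}$, so every partial sum already vanishes, is the same fact seen one level lower and is if anything cleaner. For (2) the paper's implicit argument is that $f(t^S)=a\,t^{l'}(1+g(t^S))$ has the same constants $b_j$ as $f$, because extracting the $0$-th coefficient is insensitive to the substitution $v\mapsto vS$ when $S$ is invertible over $\mathbb{Q}$ (the exponent $v S$ vanishes only for $v=0$); you instead prove the stronger statement that the finite Riemann sums themselves agree, via the bijectivity of $\zeta\mapsto\zeta^S$ on $\mu_N^n$ for $\gcd(N,\det S)=1$, and then pass to the cofinal subfamily of such $N$. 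Both arguments are valid; yours is more hands-on and makes the role of $\det S$ explicit, while the paper's route avoids any discussion of subsequences by never leaving the power-series formalism. You correctly handle the two points that actually need care: that $f(t^S)$ again has the normal form $(\ref{lem 1.2})$ (so both Shnirelman limits exist over all $N$ prime to $p$, justifying restriction to the subfamily), and that the monomial map is only a bijection on $\mu_N^n$ when $N$ is prime to $\det S$.
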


\section{Proof of Theorem \ref{main1}}

In this section, we explain Hoffman's algebraic setup of the multiple polylogarithm to make the statement of Theorem \ref{main1} precisely, and prove it.

Let $\mathfrak{H}=\mathbb{Q}\langle e_0, e_1 \rangle$ be the noncommutative polynomial algebra of indeterminates $e_0, e_1$ over $\mathbb{Q}$. We also define its subalgebras $\mathfrak{H}^0$ and $\mathfrak{H}^1$ by
\begin{align*}
\mathfrak{H}^0 =\mathbb{Q}+e_1 \mathfrak{H} e_0 \subset \mathfrak{H}^1 =\mathbb{Q}+e_1 \mathfrak{H}.
\end{align*}
For any positive integer $k$, we put $e_k \coloneqq e_1e_0^{k-1}$. We identify a tuple $(k_1, \cdots, k_r) \in \mathbb{Z}_{>0}^r$ (such a tuple is called an {\it index}) with the monomial $e_{k_1}\cdots e_{k_r} \in \mathfrak{H}^1$.

We define the $\mathbb{Q}$-linear map ${\rm Li} : \mathfrak{H}^1 \rightarrow \mathbb{Q}[[t]]$ by $1 \mapsto 1$ and $e_{k_1}\cdots e_{k_r} \mapsto \operatorname{Li}_{(k_1, \cdots, k_r)}(t)$, where $\operatorname{Li}_{(k_1, \cdots, k_r)}(t)$ is the multipolylogaritm series given by $\eqref{def multipolylog}$. (We note that if $e_{k_1}\cdots e_{k_r} \in \mathfrak{H}^0$, i.e., $k_r \geq 2$, the limit $\lim_{t \to 1} \operatorname{Li}(e_{k_1}\cdots e_{k_r} )$ converges to the multiple zeta value $\zeta(k_1, \cdots, k_r)$ with respect to the Euclidean topology of $\mathbb{R}$. This correspondence gives the $\mathbb{Q}$-linear map $\zeta : \mathfrak{H}^0 \rightarrow \mathbb{R}$.)

\begin{dfn}
We define the $\mathbb{Q}$-bilinear commutative product $\ast : \mathfrak{H}^1 \times \mathfrak{H}^1 \rightarrow \mathfrak{H}^1$ by the bilinearity and the recurrence relations
\begin{enumerate}
\item for each $w \in \mathfrak{H}^1$, $w \ast 1=1 \ast w=w$,
\item for any $k, l \in \mathbb{Z}_{>0}$ and $v, w \in \mathfrak{H}^1$, 
\begin{align*}
e_kv \ast e_lw=e_k(v \ast e_lw)+e_l(e_kv \ast w)+e_{k+l}(v \ast w).
\end{align*}
\end{enumerate}
The product $\ast$ is called the {\it harmonic product} on $\mathfrak{H}^1$.
\end{dfn}

The harmonic product $\ast$ has the following properties.

\begin{prop}[{\cite{Ho97}}]\label{prop harmonic}
\begin{enumerate}
\item For any $w_1, w_2, w_3 \in \mathfrak{H}^1$, we have $w_1 \ast (w_2 \ast w_3)=(w_1 \ast w_2) \ast w_3$ and $w_1 \ast w_2=w_2 \ast w_1$.
\item We equip $\mathfrak{H}^1$ with the multiplication $\ast$. Then $\mathfrak{H}^1_{\ast} \coloneqq (\mathfrak{H}^1, +, \ast)$ forms a commutative $\mathbb{Q}$-algebra.
\item The map ${\rm Li} : \mathfrak{H}^1_{\ast} \rightarrow \mathbb{Q}[[t]]$ is a $\mathbb{Q}$-algebra homomorphism.
\end{enumerate}
\end{prop}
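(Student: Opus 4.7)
The plan is to prove the three parts in order, using induction on total length and the recursive definition of $\ast$.

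For the commutativity in (1), bilinearity reduces the claim to monomials, and it then follows by induction on $|w_1| + |w_2|$. The base case $w_i = 1$ is immediate from axiom (i); in the inductive step, one expands both $e_k v \ast e_l w$ and $e_l w \ast e_k v$ using axiom (ii) and the three terms pair up symmetrically after applying the induction hypothesis to the strictly shorter products $v \ast e_l w$, $e_k v \ast w$ and $v \ast w$.

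For associativity, my plan is to establish a combinatorial \emph{quasi-shuffle} closed form for $u \ast v$. For monomials $u = e_{a_1}\cdots e_{a_r}$ and $v = e_{b_1}\cdots e_{b_s}$, the formula expresses $u \ast v$ as the sum over all ways to interleave the two index sequences in order, with the option to merge any number of adjacent cross-pairs $(a_i, b_j)$ by replacing them with a single letter $e_{a_i + b_j}$. Checking by induction on $r+s$ that this expression satisfies axioms (i) and (ii) identifies it with $\ast$. From the closed form, both $u \ast (v \ast w)$ and $(u \ast v) \ast w$ are seen to equal the same sum over triple quasi-shuffles of $(u, v, w)$, giving associativity. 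Establishing this combinatorial identification is the technical core of the first part.

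Part (2) is then a formality: the underlying additive group is the $\mathbb{Q}$-vector space $\mathfrak{H}^1$, bilinearity of $\ast$ gives the two distributive laws, axiom (i) supplies a two-sided unit $1$, and (1) provides commutativity and associativity, so $(\mathfrak{H}^1, +, \ast)$ is a unital commutative $\mathbb{Q}$-algebra.

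For (3), my plan is again to reduce to monomials by bilinearity and induct on $|u| + |v|$. Writing out the series expansions of $\operatorname{Li}(u)$ and $\operatorname{Li}(v)$ and taking their product, one partitions the resulting double sum over pairs of strictly increasing index tuples according to the relative order of their extreme indices; the three cases should correspond to the three terms on the right of axiom (ii), so that the induction hypothesis applied to the shorter inputs $v \ast e_l w$, $e_k v \ast w$, and $v \ast w$ closes the argument. The main obstacle — indeed the step that merits the most care — is aligning the $t$-exponent appearing in the power-series product with those appearing in $\operatorname{Li}$ applied to each of the three recursive terms, which requires carefully matching the combined index structures on the two sides.
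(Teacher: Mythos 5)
Your plans for parts (1) and (2) are sound: the induction for commutativity works exactly as you describe, and identifying $\ast$ with the combinatorial quasi-shuffle sum and then comparing both iterated products with the sum over triple quasi-shuffles is a standard, complete route to associativity. For comparison, the paper itself offers no proof of this proposition at all --- it is quoted from \cite{Ho97} --- so for (1) and (2) your sketch is a legitimate reconstruction of the cited argument.

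The genuine problem is part (3), and it is not a repairable gap in your argument but a failure of the statement itself under the paper's definition \eqref{def multipolylog}. There the variable enters as $t^{m_r}$, so in the power-series product $\operatorname{Li}(u)\cdot\operatorname{Li}(v)$ every term carries $t^{m_r+n_s}$, the \emph{sum} of the two maximal summation indices, whereas every term of $\operatorname{Li}(u\ast v)$ carries $t$ raised to the single maximal index of the interleaved tuple. No partition of the index pairs can reconcile these exponents; this is precisely the ``alignment of $t$-exponents'' you flagged as the delicate step, and it fails. Concretely,
\begin{align*}
\operatorname{Li}(e_1)^2=\left(\sum_{m\ge 1}\frac{t^m}{m}\right)^{2}=t^2+O(t^3),
\qquad
\operatorname{Li}(e_1\ast e_1)=2\operatorname{Li}_{(1,1)}(t)+\operatorname{Li}_{(2)}(t)=t+O(t^2),
\end{align*}
so $\operatorname{Li}$ is not multiplicative for $\ast$. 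What \cite{Ho97} actually proves is the cutoff version: the truncated harmonic sums $A_N(e_{k_1}\cdots e_{k_r})=\sum_{0<m_1<\cdots<m_r\le N}m_1^{-k_1}\cdots m_r^{-k_r}$ satisfy $A_N(u)A_N(v)=A_N(u\ast v)$ for every $N$ (equivalently, the realization of $\mathfrak{H}^1_\ast$ by quasi-symmetric functions is an algebra homomorphism), whence $\zeta:\mathfrak{H}^0\rightarrow\mathbb{R}$ is a $\ast$-homomorphism upon letting $N\to\infty$; the one-variable series instead obey the shuffle relation $\operatorname{Li}(u)\operatorname{Li}(v)=\operatorname{Li}(u\shuffle v)$ coming from the iterated-integral representation. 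Note that the paper only ever uses the cutoff identity: Lemma \ref{lem circled} relies on the argument of \cite{KY18}, where the two prefix sums share the common upper bound $m$ and the power of $t$ is that common bound, so no exponent mismatch occurs. Your partition-by-relative-order argument does work verbatim for the $A_N$'s, since with a common cutoff the variable $t$ plays no role; but as stated for $\operatorname{Li}$ into $\mathbb{Q}[[t]]$, part (3) admits no proof.
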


Hoffman also introduced the shuffle product $\shuffle$ on $\mathfrak{H}$, which has similar properties as Proposition \ref{prop harmonic}. (Combining these properties of these two product, we obtain the double shuffle relation of multiple zeta values.)

We also define a variant product to state a preliminary lemma as follows.

\begin{dfn}[{\cite[Section 2]{KY18}}]
We define the $\mathbb{Q}$-bilinear map $\circledast : e_1\mathfrak{H} \times e_1\mathfrak{H} \rightarrow e_1\mathfrak{H}e_0$ by the bilinearity and the recurrence relations
\begin{align*}
ve_k \circledast we_l =(v \ast w)e_{k+l}
\end{align*}
for any $k, l \in \mathbb{Z}_{>0}$ and $v, w \in \mathfrak{H}^1$. We call the product $\circledast$ the {\it circled harmonic product}.
\end{dfn}

We use the following lemma in the proof of Theorem \ref{main1}.

\begin{lem}\label{lem circled}
For any positive integers $k$ and $l$, we have
\begin{align*}
&{\rm Li}_{ (\{1\}^{k-1} \ast \{1\}^{l-1}, 2)} (t)={\rm Li}_{ (\{1\}^{k} \circledast \{1\}^{l})} (t) \\
=&\sum_{\substack{0<m_1<\cdots<m_{k-1}<m \\ 0<n_1<\cdots<n_{l-1}<m}}\frac{t^m}{m_1 \cdots m_{k-1}n_1 \cdots n_{l-1}m^2}.
\end{align*}
\end{lem}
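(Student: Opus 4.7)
I would separate the two equalities: the first is a direct unpacking of the definition of $\circledast$, while the second reduces to a truncated version of the stuffle identity that underlies Proposition \ref{prop harmonic}(3).

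For the first equality, write $\{1\}^k = e_1^{k-1}\cdot e_1$ and $\{1\}^l = e_1^{l-1}\cdot e_1$ in $e_1\mathfrak{H}$. The defining recurrence of $\circledast$ applied to these elements gives
$$\{1\}^k \circledast \{1\}^l \;=\; (e_1^{k-1}\ast e_1^{l-1})\cdot e_2 \;=\; \bigl(\{1\}^{k-1}\ast\{1\}^{l-1}\bigr)\cdot e_2 \;\in\; \mathfrak{H}^0.$$
Applying the $\mathbb{Q}$-linear map $\operatorname{Li}$ yields the first claimed equality, since right-multiplication of an index word by $e_2$ corresponds to appending the entry $2$ to the index in the notation of \eqref{def multipolylog}.

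For the second equality, I would expand $\{1\}^{k-1}\ast\{1\}^{l-1} = \sum_{\mathbf{k}} c_{\mathbf{k}}\,\mathbf{k}$ as a finite $\mathbb{Q}$-linear combination of indices and use \eqref{def multipolylog} to write, for each index $\mathbf{k}=(k_1,\ldots,k_r)$,
$$\operatorname{Li}_{(\mathbf{k},2)}(t) \;=\; \sum_{m\geq 1}\frac{t^m}{m^2}\,\zeta^{(m)}(\mathbf{k}),\qquad \zeta^{(m)}(k_1,\ldots,k_r) \coloneqq \sum_{0<a_1<\cdots<a_r<m}\frac{1}{a_1^{k_1}\cdots a_r^{k_r}}.$$
By $\mathbb{Q}$-linearity of $\operatorname{Li}$, followed by swapping the finite inner sum with the sum over $m$, the claim reduces to the identity
$$\sum_{\mathbf{k}}c_{\mathbf{k}}\,\zeta^{(m)}(\mathbf{k}) \;=\; \zeta^{(m)}(\{1\}^{k-1})\cdot \zeta^{(m)}(\{1\}^{l-1})\qquad (m\geq 1),$$
whose right-hand side, written out, is exactly the product of the two independent chains appearing in the statement, multiplied by $t^m/m^2$ and summed over $m$.

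The displayed identity is the truncated analogue of the fact that $\operatorname{Li}:\mathfrak{H}^1_\ast \to \mathbb{Q}[[t]]$ is an algebra homomorphism (Proposition \ref{prop harmonic}(3)), and is the only step that requires real work. I would prove it by induction on the total depth, splitting the product of the two truncated sums according to whether the next contribution comes from the first chain, the second chain, or from a coincidence of the two; these three cases mirror exactly the three terms $e_k(v\ast e_l w)$, $e_l(e_k v\ast w)$ and $e_{k+l}(v\ast w)$ in the recurrence defining $\ast$. This induction is where the main (though mild) obstacle lies; everything else is formal manipulation of power series.
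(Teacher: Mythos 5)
Your proposal is correct and follows essentially the same route as the paper: the first equality is obtained exactly as you do, by reading off the defining recurrence $ve_k \circledast we_l=(v\ast w)e_{k+l}$ with $v=e_1^{k-1}$, $w=e_1^{l-1}$, $k=l=1$, and for the second equality the paper simply cites the argument at the end of Section 2 of \cite{KY18}, which is precisely the truncated harmonic-sum homomorphism identity $\zeta^{(m)}(u)\zeta^{(m)}(w)=\zeta^{(m)}(u\ast w)$ that you isolate and prove by induction on depth. Your write-up just makes explicit the content that the paper outsources to the citation.
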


\begin{proof}
Since indices $(\{1\}^{k-1} \ast \{1\}^{l-1}, 2)$ and $(\{1\}^{k} \circledast \{1\}^{l})$ correspond to monomials $(e_1^{k-1} \ast e_1^{l-1})e_2$ and $e_1^k \circledast e_1^l$ respectively, the first equality follows from the definition of the circled harmonic product. The second equality follows from the same argument at the end of Section 2 in \cite{KY18}.
\end{proof}

We now prove Theorem \ref{main1}.

\begin{proof}[Proof of Theorem \ref{main1}]
We compute
\begin{align*}
m_{p, k}((t-\alpha)(t-\beta))
=&\int_{T_p^n} \left\{\log_p (z-\alpha)+\log_p (z-\beta)\right\}^k \frac{dz}{z} \\
=&\sum_{i=0}^k \binom{k}{i} \int_{T_p^n} \left(\log_p^{k-i} (z-\alpha)\right)\left(\log_p^{i} (z-\beta)\right)\frac{dz}{z}.
\end{align*}
and put 
\begin{align*}
I(i) \coloneqq \int_{T_p^n} \left(\log_p^{k-i} (z-\alpha)\right)\left(\log_p^{i} (z-\beta)\right)\frac{dz}{z}
\end{align*}
for $0 \leq i \leq k$. We see that
\begin{align*}
I(0) &= \int_{T_p^n} \log_p^{k} (z-\alpha)\frac{dz}{z}=\int_{T_p^n} \left\{\log_p \alpha+\log_p \left(1-\frac{z}{\alpha}\right)\right\}^k \frac{dz}{z} \\
&=\sum_{j=0}^k \binom{k}{j} \left(\log_p^{k-j} \alpha\right)\int_{T_p^n} \log_p^j \left(1-\frac{z}{\alpha}\right) \frac{dz}{z}=\log_p^{k} \alpha.
\end{align*}
Here, we used the fact that $\log_p^j (1-z/\alpha) \in \mathcal{R}_1(\mathbb{C}_p) \cap z \mathbb{C}_p[[z]]$ for $j \geq 1$ and Lemma \ref{lemBD1} (2) in the last equality. By the similar argument, we also obtain $I(k)=0$. For $1 \leq i \leq k-1$, we have
\begin{align*}
I(i)&=\int_{T_p^n} \left\{\log_p \alpha+\log_p \left(1-\frac{z}{\alpha}\right)\right\}^{k-i}\left\{\log_p z+\log_p \left(1-\frac{\beta}{z}\right)\right\}^{i} \frac{dz}{z} \\
&=\sum_{j=1}^{k-i}\binom{k-i}{j}\left(\log_p^{k-i-j} \alpha\right)\int_{T_p^n} \left(\log_p^j \left(1-\frac{z}{\alpha}\right)\right) \left(\log_p^i \left(1-\frac{\beta}{z}\right)\right) \frac{dz}{z}
\end{align*}
by using Lemma \ref{log int lem} and the fact that the integral for $j=0$ in the summation is zero. Since it follows from \cite[Lemma 1(ii)]{AK99} that $\log_p^j (1-z/\alpha)=(-1)^j j! \operatorname{Li}_{\{1\}^j}(z/\alpha)$ and $\log_p^j (1-\beta/z)=(-1)^i i! \operatorname{Li}_{\{1\}^i}(\beta/z)$, we have
\begin{align*}
I(i)=&\sum_{j=1}^{k-i}(-1)^{i+j}i!j!\binom{k-i}{j}\left(\log_p^{k-i-j} \alpha\right)\times \\ 
&\int_{T_p} \left\{\sum_{0<l_1<\cdots<l_j} \frac{1}{l_1 \cdots l_j} \left(\frac{z}{\alpha}\right)^{l_j} \right\} \left\{\sum_{0<l'_1<\cdots<l'_i} \frac{1}{l'_1 \cdots l'_i} \left(\frac{\beta}{z}\right)^{l'_i} \right\} \frac{dz}{z} \\
=&\sum_{j=1}^{k-i}(-1)^{i+j}i!j!\binom{k-i}{j}\left(\log_p^{k-i-j} \alpha\right)\sum_{\substack{0<l_1<\cdots<l_{j-1}<l \\ 0<l'_1<\cdots<l'_{i-1}<l}}\frac{1}{l_1 \cdots l_{j-1}l'_1 \cdots l'_{i-1}l^2}\left(\frac{\beta}{\alpha}\right)^{l} \\
=&\sum_{j=1}^{k-i}(-1)^{i+j}i!j!\binom{k-i}{j}\left(\log_p^{k-i-j} \alpha\right)\operatorname{Li}_{\left(\{1\}^{i-1}\ast \{1\}^{j-1}, 2\right)} \left(\frac{\beta}{\alpha}\right).
\end{align*}
Here, we used Lemma \ref{lemBD1} (2) in the second equality and Lemma \ref{lem circled} in the last equality. We conclude that
\begin{align*}
&m_{p, k}((t-\alpha)(t-\beta)) \\
=&\log_p^{k} \alpha +\sum_{i=1}^{k-1} \binom{k}{i}\sum_{j=1}^{k-i}(-1)^{i+j}i!j!\binom{k-i}{j}\left(\log_p^{k-i-j} \alpha\right)\operatorname{Li}_{\left(\{1\}^{i-1}\ast \{1\}^{j-1}, 2\right)} \left(\frac{\beta}{\alpha}\right) \\
=&\log_p^k \alpha+\sum_{\substack {i+j \leq k \\ i, j \geq 1}} (-1)^{i+j} \frac{k!}{(k-i-j)!}\left(\log_p^{k-i-j} \alpha\right) \operatorname{Li}_{\left(\{1\}^{i-1}\ast \{1\}^{j-1}, 2\right)} \left(\frac{\beta}{\alpha}\right).
\end{align*}
\end{proof}

\begin{ex}
Let $\alpha, \beta \in \mathbb{C}_p$ with $0<|\beta|<1<|\alpha|$ and $f(t)=(t-\alpha)(t-\beta)$. By Theorem \ref{main1}, we obtain
\begin{align*}
m_{p, 2}(f)&=\log_p^2 \alpha+2\operatorname{Li}_2\left(\frac{\beta}{\alpha}\right), \\
m_{p, 3}(f)&=\log_p^3 \alpha+6(\log_p \alpha)\operatorname{Li}_2\left(\frac{\beta}{\alpha}\right)-12\operatorname{Li}_{(1, 2)}\left(\frac{\beta}{\alpha}\right),
\end{align*}
which are $p$-adic analogues of Remark \ref{Ak comp}.
\end{ex}

At the end of this section, we prove the following proposition, which is a $p$-adic analogue of (\ref{RV}).

\begin{prop}\label{analogueRV}
Let $c \in \mathbb{C}_p$ with $|c|>1$ and $f(t_1, t_2)=t_1+t_1^{-1}+t_2+t_2^{-1}+c$. Then we have
\begin{align*}
m_p(f)=\log_p c -\frac{2}{c^2} {}_4F_3\left(\frac{3}{2}, \frac{3}{2}, 1, 1 ; 2, 2, 2 ; \frac{16}{c^2}\right).
\end{align*}
\end{prop}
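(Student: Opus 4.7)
\smallskip

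\noindent\textbf{Proof proposal.} The plan is to imitate the classical Rodriguez-Villegas computation, using the Shnirelman integral in place of the torus integral. Since $|c|>1$ and $|t_1+t_1^{-1}+t_2+t_2^{-1}| \leq 1$ on $T_p^2$, we can write
\begin{align*}
f(t_1,t_2)=c\left(1+g(t_1,t_2)\right), \quad g(t_1,t_2)=\frac{t_1+t_1^{-1}+t_2+t_2^{-1}}{c},
\end{align*}
with $g \in \mathfrak{m}_p[t_1^{\pm 1},t_2^{\pm 1}]$. Hence $f$ satisfies the hypothesis of Proposition \ref{exist pHMM}, and applying that proposition with $k=1$ gives
\begin{align*}
m_p(f)=\log_p c+\sum_{m=1}^{\infty}\frac{(-1)^{m+1}}{m}\,[g^m]_0,
\end{align*}
where $[g^m]_0$ denotes the constant term of the Laurent polynomial $g^m$; convergence of the series is guaranteed because $[g^m]_0 \in \mathfrak{m}_p^m$.

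Next I would compute $[g^m]_0$ combinatorially. By the multinomial expansion, the constant term of $(t_1+t_1^{-1}+t_2+t_2^{-1})^m$ vanishes unless $m=2k$ is even, in which case it equals
\begin{align*}
\sum_{a+b=k}\frac{(2k)!}{(a!)^2(b!)^2}=\binom{2k}{k}\sum_{a=0}^{k}\binom{k}{a}^2=\binom{2k}{k}^2
\end{align*}
using the Vandermonde identity. Thus $[g^{2k}]_0 = c^{-2k}\binom{2k}{k}^2$ and the above series collapses to
\begin{align*}
m_p(f)=\log_p c-\sum_{k=1}^{\infty}\frac{1}{2k\,c^{2k}}\binom{2k}{k}^2.
\end{align*}

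Finally I would convert this series into the stated ${}_4F_3$ form, which is pure bookkeeping with Pochhammer symbols. Using $\binom{2k}{k}=4^k(1/2)_k/k!$ and $(1/2)_k=(1/2)(3/2)_{k-1}$ for $k\geq 1$, together with $(2)_{k-1}=k!$ and $(1)_{k-1}=(k-1)!$, one checks the elementary identity
\begin{align*}
\frac{1}{2k}\binom{2k}{k}^2=\frac{2}{c^{-2}}\cdot\frac{(3/2)_{k-1}^2(1)_{k-1}^2}{(2)_{k-1}^3(k-1)!}\cdot\frac{16^{k-1}}{c^{2k-2}}\cdot\frac{1}{c^{-2}}\cdot\frac{1}{c^{2k}}\cdot c^{2k},
\end{align*}
that is, after cancellation, $\tfrac{1}{2kc^{2k}}\binom{2k}{k}^2=\tfrac{2}{c^2}\cdot\tfrac{(3/2)_{k-1}^2(1)_{k-1}^2}{(2)_{k-1}^3(k-1)!}\cdot(16/c^2)^{k-1}$. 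Re-indexing via $n=k-1$ then yields exactly $\tfrac{2}{c^2}\,{}_4F_3(\tfrac{3}{2},\tfrac{3}{2},1,1;2,2,2;16/c^2)$, completing the proof.

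The only substantive point is the first step: verifying that the $p$-adic assumption $|c|>1$ makes $g$ small enough for the logarithm series to converge term-by-term to a Laurent series whose constant term we can extract via Lemma \ref{lemBD1}(2). The combinatorial identity for $[g^m]_0$ and the Pochhammer manipulation at the end are standard; the main obstacle is really just being careful that Proposition \ref{exist pHMM} applies and that the rearrangement of the series respects $p$-adic convergence, both of which follow from $|g(z_1,z_2)|<1$ uniformly on $\mu_{ur}^2$.
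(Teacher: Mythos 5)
Your proposal is correct and follows essentially the same route as the paper: write $f=c(1+g)$ with $g\in\mathfrak{m}_p[t_1^{\pm1},t_2^{\pm1}]$, expand $\log_p(1+g)$ and extract constant terms via Lemma \ref{lemBD1}(2) (equivalently, Proposition \ref{exist pHMM} with $k=1$), identify $[g^{2k}]_0=c^{-2k}\binom{2k}{k}^2$, and repackage the resulting series as the stated ${}_4F_3$. The only blemish is that your intermediate displayed ``elementary identity'' is garbled (the stray factors $\tfrac{2}{c^{-2}}$, $\tfrac{1}{c^{-2}}\cdot\tfrac{1}{c^{2k}}\cdot c^{2k}$ do not balance), but the cleaned-up identity $\tfrac{1}{2kc^{2k}}\binom{2k}{k}^2=\tfrac{2}{c^2}\cdot\tfrac{(3/2)_{k-1}^2(1)_{k-1}^2}{(2)_{k-1}^3(k-1)!}\cdot(16/c^2)^{k-1}$ that you actually use is correct, so this is only a typographical issue.
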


\begin{proof}
Put $g(t_1, t_2)=c^{-1}(t_1+t_1^{-1}+t_2+t_2^{-1}) \in \mathfrak{m}_p[t_1^{\pm 1}, t_2^{\pm 1}]$. Then we compute
\begin{align*}
m_p(f)&=\int_{T_p^2} \log_p c\left(1+g(z_1, z_2)\right) \frac{dz_1}{z_1}\frac{dz_2}{z_2} \\
&=\log_p c +\int_{T_p^2} \sum_{m=1}^\infty \frac{(-1)^{m-1}}{m}g(z_1, z_2)^m \frac{dz_1}{z_1}\frac{dz_2}{z_2} \\
&=\log_p c -\sum_{m=1}^\infty  \frac{(-c^{-1})^m}{m} \int_{T_p^2} (z_1+z_1^{-1}+z_2+z_2^{-1})^m \frac{dz_1}{z_1}\frac{dz_2}{z_2} \\
&=\log_p c -\sum_{m=1}^\infty  \frac{(-c^{-1})^m}{m} \int_{T_p^2} \sum_{\substack{i_1+\cdots +i_4=m \\ i_1, \cdots, i_4 \geq 0}} \frac{m!}{i_1!\cdots i_4!}z_1^{i_1-i_2}z_2^{i_3-i_4} \frac{dz_1}{z_1}\frac{dz_2}{z_2} \\
&=\log_p c -\sum_{m=1}^\infty  \frac{c^{-2m}}{2m} \sum_{i=0}^m \frac{(2m)!}{(i!(m-i)!)^2}.
\end{align*}
Here, we used Lemma \ref{lemBD1} (2) in the last equality. Since $(2m)!=4^m\cdot m! \cdot (1/2)_m$ and 
\begin{align*}
\sum_{i=0}^m \binom{m}{i}^2=\binom{2m}{m}=\frac{4^m \cdot \left(\frac{1}{2}\right)_m}{m!},
\end{align*}
we obtain
\begin{align*}
m_p(f)&=\log_p c -\sum_{m=1}^\infty  \frac{c^{-2m}}{2m} \sum_{i=0}^m \frac{4^m\cdot m! \cdot \left(\frac{1}{2}\right)_m}{(i!(m-i)!)^2} \\
&=\log_p c -\sum_{m=1}^\infty \frac{\left(\frac{1}{2}\right)_m}{2m \cdot m!} \left(\frac{4}{c^2}\right)^m \sum_{i=0}^m \binom{m}{i}^2 \\
&=\log_p c -\sum_{m=1}^\infty \frac{\left(\frac{1}{2}\right)_m \left(\frac{1}{2}\right)_m}{(1)_m} \frac{1}{2m \cdot m!} \left(\frac{16}{c^2}\right)^m \\
&=\log_p c -\sum_{m=1}^\infty \frac{\left(\frac{3}{2}\right)_{m-1} \left(\frac{3}{2}\right)_{m-1}}{4 \cdot (2)_{m-1}} \frac{1}{2m \cdot m!} \left(\frac{16}{c^2}\right)^{m-1} \frac{16}{c^2} \\
&=\log_p c -\frac{2}{c^2}\sum_{m=1}^\infty \frac{\left(\frac{3}{2}\right)_{m-1} \left(\frac{3}{2}\right)_{m-1}}{(2)_{m-1}(2)_{m-1}} \frac{(1)_{m-1}}{(2)_{m-1}} \frac{(1)_{m-1}}{(m-1)!} \left(\frac{16}{c^2}\right)^{m-1} \\
&=\log_p c -\frac{2}{c^2} {}_4F_3\left(\frac{3}{2}, \frac{3}{2}, 1, 1 ; 2, 2, 2 ; \frac{16}{c^2}\right).
\end{align*}
\end{proof}

\section{$p$-adic zeta Mahler measures}

In this section, we make some observation on $p$-adic zeta Mahler measures and prove the rest of our main results.

\begin{prop}\label{conv property}
Let $f \in \mathbb{C}_p[t_1^{\pm 1}, \dots ,t_n^{\pm 1}]$ and suppose that $f$ does not vanish in any point of $T_p^n$.
\begin{enumerate}
\item There exists a constant $r>0$ such that $Z_p(X, f)$ is $p$-adically convergent in $D(r^-)$.
\item If there exists a constant $c>1/(p-1)$ such that $|m_{p, k}(f)| \leq p^{-ck}$ for any positive integer $k$, then $Z_p(X, f)$ is $p$-adically convergent in $D(1)$.
\end{enumerate}
\end{prop}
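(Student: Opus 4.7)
The plan is to reduce both parts to an elementary exponential-type bound on $|m_{p,k}(f)|$ and then play it against the standard lower bound $|k!|\geq p^{-(k-1)/(p-1)}$.

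First I would use the representation $(\ref{lem 1.2})$ to write $f=a t_1^{l_1}\cdots t_n^{l_n}(1+g)$ with $a\in\mathbb{C}_p^{\times}$ and $g\in\mathfrak{m}_p[t_1^{\pm 1},\dots,t_n^{\pm 1}]$. Because $\log_p\zeta_i=0$ for every $\zeta_i\in\mu_{ur}$, this gives
\begin{equation*}
\log_p f(\zeta)=\log_p a+\log_p(1+g(\zeta))\qquad (\zeta\in\mu_{ur}^n).
\end{equation*}
Since $g$ has only finitely many coefficients, all of $p$-adic norm strictly less than $1$, one gets a uniform bound $|g(\zeta)|\leq\mu<1$ on $\mu_{ur}^n$. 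Termwise estimation of $\log_p(1+x)=\sum_{n\geq 1}(-1)^{n-1}x^n/n$ together with the crude inequality $|n|^{-1}\leq n$ then produces a uniform bound $|\log_p(1+g(\zeta))|\leq B$ with $B=B(f)$. Combining with $|\log_p a|$ yields a constant $C=C(f)>0$ such that $|\log_p f(\zeta)|\leq C$ on $\mu_{ur}^n$, and therefore $|\log_p^k f(\zeta)|\leq C^k$.

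Lemma \ref{lemBD1}(1) immediately upgrades this to $|m_{p,k}(f)|\leq C^k$ for every $k\geq 0$. Combined with the standard estimate $|k!|\geq p^{-(k-1)/(p-1)}$, this gives
\begin{equation*}
\left|\frac{m_{p,k}(f)}{k!}\right|\leq C^k\,p^{(k-1)/(p-1)}\leq \bigl(C\,p^{1/(p-1)}\bigr)^k,
\end{equation*}
so $Z_p(X,f)$ converges on $D(r^-)$ with $r=(C\,p^{1/(p-1)})^{-1}$, proving (1). For (2), substituting the hypothesis $|m_{p,k}(f)|\leq p^{-ck}$ into the same computation yields
\begin{equation*}
\left|\frac{m_{p,k}(f)}{k!}\,X^k\right|\leq \bigl(p^{-(c-1/(p-1))}\,|X|\bigr)^k,
\end{equation*}
whose base is strictly less than $1$ for every $X\in D(1)$, since $c>1/(p-1)$. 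This forces geometric decay of the general term and gives convergence on the whole closed unit disc.

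The only step with any content is the uniform bound on $|\log_p(1+g(\zeta))|$, which is settled by the finite-polynomial nature of $g$ and the ultrametric inequality; the rest is a direct comparison of two geometric progressions with $|k!|^{-1}$, so I do not anticipate any serious obstacle.
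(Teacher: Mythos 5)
Your proposal is correct and follows essentially the same route as the paper: decompose $f$ via $(\ref{lem 1.2})$, bound $|\log_p f(\zeta)|$ uniformly on $\mu_{ur}^n$ (the paper uses the Gauss norm of $g$ where you use $|n|^{-1}\le n$ together with $|g(\zeta)|\le\mu<1$, but these are the same estimate), apply Lemma \ref{lemBD1}(1) to get $|m_{p,k}(f)|\le C^k$, and compare with $|k!|\ge p^{-(k-1)/(p-1)}$. No gaps; if anything, your explicit reciprocal $r=(C\,p^{1/(p-1)})^{-1}$ states the radius more carefully than the paper's own wording.
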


\begin{proof}
(1) We write $f$ as in (\ref{lem 1.2}). Note that, for each $(\zeta_1, \cdots, \zeta_n) \in \mu_{ur}^n$,
\begin{align*}
\left|\log_p \left(1+g(\zeta_1, \cdots, \zeta_n)\right)\right| &= \left|\sum_{m=1}^\infty \frac{(-1)^{m+1}}{m}g(\zeta_1, \cdots, \zeta_n)^m \right| \\
&\leq \sup_{m \geq 1}\left\{|m|^{-1}|g|^m_{\text{Gauss}}\right\} \eqqcolon C_f<\infty,
\end{align*}
where $|g|_{\text{Gauss}}$ is the Gauss norm of $g$. Hence Lemma \ref{lemBD1}(1) implies that
\begin{align*}
|m_{p, k}(f)|\leq \sup_{\zeta \in \mu_{ur}^n} \left\{ \left|\log_p f(\zeta)\right|^k \right\} \leq \left( \max \left\{ |\log_p a|, C_f \right\} \right)^k \leq p^{-Ck}
\end{align*}
for some $C \in \mathbb{R}$ and we obtain 
\begin{align*}
\limsup_{k \to \infty} \left|\frac{m_{p, k}(f)}{k!}\right|^{\frac{1}{k}} \leq p^{\frac{1}{p-1}-C} \eqqcolon r.
\end{align*}
Thus $Z_p(X, f)$ converges in $D(r^-)$. \\
(2) Since we can take $C=c$ and $r>1$ in the proof of the assertion (1), $Z_p(X, f)$ converges in $D(1)$.
\end{proof}

\begin{rmk}
Let $K$ be a finite extension of $\mathbb{Q}_p$ of ramified index $e <p-1$. If $f \in K[t_1^{\pm 1}, \dots ,t_n^{\pm 1}]$ does not vanish in any point of $T_p^n$, then $|m_{p, k}(f)| \leq p^{-k/e}$ for any positive integer $k$. Hence the assumption of Proposition \ref{conv property}(2) is satisfied.
\end{rmk}

\begin{prop}\label{exp rep}
Let $K$ be a finite extension of $\mathbb{Q}_p$ of ramified index $e <p-1$ and suppose that $f \in K[t_1^{\pm 1}, \dots ,t_n^{\pm 1}]$ does not vanish in any point of $T_p^n$. Then, for any $s \in \mathbb{C}_p$ with $|s|\leq 1$, we have
\begin{align*}
Z_p(s, f)=\int_{T_p^n} \exp\left(s \log_p f(z_1, \cdots, z_n) \right) \frac{dz_1}{z_1} \cdots \frac{dz_n}{z_n}.
\end{align*}
\end{prop}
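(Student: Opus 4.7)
The plan is to expand $Z_p(s, f)$ via its defining power series, substitute the Shnirelman integral representation of each coefficient $m_{p, k}(f)$, and then interchange the order of summation and integration using Lemma \ref{ele lem}(1) in order to identify the resulting integrand with $\exp(s \log_p f)$.

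The crux is a uniform estimate for $|\log_p f(\zeta)|$ on $\mu_{ur}^n$. Writing $f = a\,t_1^{l_1} \cdots t_n^{l_n}(1 + g)$ as in $(\ref{lem 1.2})$ with $a \in K^{\times}$ and $g \in \mathfrak{m}_p[t_1^{\pm 1}, \dots, t_n^{\pm 1}]$, and using $\log_p \zeta_i = 0$ for each $\zeta_i \in \mu_{ur}$, one obtains $\log_p f(\zeta) = \log_p \langle a \rangle + \log_p(1 + g(\zeta))$. Because $K/\mathbb{Q}_p$ has ramification index $e$, the element $\langle a \rangle - 1$ and the coefficients of $g$ all lie in the maximal ideal $\mathfrak{m}_K$ of $\mathcal{O}_K$, so their absolute values are at most $p^{-1/e}$. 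The hypothesis $e < p-1$ gives $p^{-1/e} < p^{-1/(p-1)}$, so the identity $|\log_p(1+x)| = |x|$ valid on $|x| < p^{-1/(p-1)}$ yields the uniform bound $|\log_p f(\zeta)| \leq p^{-1/e}$.

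For $|s| \leq 1$ and $\zeta \in \mu_{ur}^n$, the $k$-th term of $\exp(s \log_p f(\zeta))$ is then bounded by $p^{-k(1/e - 1/(p-1))}$ uniformly in $\zeta$, so the partial sums $S_N(\zeta) = \sum_{k=0}^N s^k \log_p^k f(\zeta) / k!$ converge uniformly on $\mu_{ur}^n$ to $\exp(s \log_p f(\zeta))$. Each $S_N$ is Shnirelman integrable, since each $\log_p^k f$ is by Proposition \ref{exist pHMM}, and $\int_{T_p^n} S_N \frac{dz_1}{z_1}\cdots\frac{dz_n}{z_n} = \sum_{k=0}^N s^k m_{p, k}(f)/k!$ tends to $Z_p(s, f)$ by Proposition \ref{conv property}(2) together with the remark following it. Applying Lemma \ref{ele lem}(1) then yields the desired identity.

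The only real obstacle is the uniform estimate above; the rest is bookkeeping. What makes everything fit together is precisely the condition $e < p - 1$, which pushes $|\log_p f(\zeta)|$ inside the radius of convergence of the $p$-adic exponential for all $s$ with $|s|\leq 1$, so that $\exp(s \log_p f(\zeta))$ makes sense and the termwise bound is geometric.
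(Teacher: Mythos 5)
Your proposal is correct and takes essentially the same route as the paper: uniform convergence on $\mu_{ur}^n$ of the partial sums of $\exp(s\log_p f)$ for $|s|\leq 1$, followed by Lemma \ref{ele lem}(1) to interchange the limit with the Shnirelman integral and identify the result with $\sum_k m_{p,k}(f)s^k/k! = Z_p(s,f)$. The paper states the uniform convergence without proof, whereas you make explicit the underlying estimate $|\log_p f(\zeta)|\leq p^{-1/e}<p^{-1/(p-1)}$ coming from the hypothesis $e<p-1$.
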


\begin{proof}
For any $s \in \mathbb{C}_p$ with $|s|\leq 1$, the family $\{\sum_{k=0}^m (s\log_p f(z_1, \cdots , z_n))^k/k!\}_{m \geq 0}$ converges uniformly to $\exp\left(s \log_p f(z_1, \cdots, z_n) \right)$ on $\mu_{ur}^n$. Hence the proposition follows from Lemma \ref{ele lem}.
\end{proof}

Finally, we prove Theorem \ref{main2} and Theorem \ref{main3}.

\begin{proof}[Proof of Theorem \ref{main2}]
For any $\zeta \in \mu_{ur}$, we have
\begin{align*}
&\exp(s \log_p (\zeta-\alpha)(\zeta-\beta)) \\
=&\exp\left(s \left(\log_p \alpha+\log_p \zeta+\log_p \left(1-\frac{\zeta}{\alpha}\right)+\log_p \left(1-\frac{\beta}{\zeta}\right) \right) \right) \\
=&\exp\left(s \log_p \langle \alpha \rangle \right) \exp\left(s \log_p \left(1-\frac{\zeta}{\alpha}\right) \right) \exp\left(s \log_p \left(1-\frac{\beta}{\zeta}\right) \right) \\
=&\langle \alpha \rangle^s \left(1-\frac{\zeta}{\alpha}\right)^s \left(1-\frac{\beta}{\zeta}\right)^s.
\end{align*}
Here we note that $|s \log_p \langle \alpha \rangle | \leq |\langle \alpha \rangle-1|<p^{-\frac{1}{p-1}}$ and that $\exp\left(s \log_p \langle \alpha \rangle \right)=\langle \alpha \rangle^s$ (see Lemma 5.5 and Proposition 5.7 in \cite{Wa97}).
Hence, using Proposition \ref{exp rep} and Lemma \ref{lemBD1}(2), we compute
\begin{align*}
Z_p(s, (t-\alpha)(t-\beta))&=\lim_{\substack{N \to \infty \\ (N,p)=1}} \frac{1}{N} \sum_{\zeta \in \mu_N} \exp(s \log_p (\zeta-\alpha)(\zeta-\beta)) \\
&=\lim_{\substack{N \to \infty \\ (N,p)=1}} \frac{1}{N} \sum_{\zeta \in \mu_N} \langle \alpha \rangle^s \left(1-\frac{\zeta}{\alpha}\right)^s \left(1-\frac{\beta}{\zeta}\right)^s \\
&=\langle \alpha \rangle^s \int_{T_p} \left(1-\frac{\zeta}{\alpha}\right)^s \left(1-\frac{\beta}{\zeta}\right)^s \frac{dz}{z} \\
&=\langle \alpha \rangle^s \int_{T_p} \left\{\sum_{n=0}^\infty \binom{s}{n} \left( -\frac{z}{\alpha}\right)^n \right\} \left\{\sum_{n=0}^\infty \binom{s}{n} \left( -\frac{\beta}{z}\right)^n \right\} \frac{dz}{z} \\
&=\langle \alpha \rangle^s \left\{1+ \sum_{n=1}^\infty \binom{s}{n}^2 \left(\frac{\beta}{\alpha}\right)^n \right\} \\
&=\langle \alpha \rangle^s \left\{1+ \sum_{n=1}^\infty \frac{(-s)_n(-s)_n}{(1)_nn!} \left(\frac{\beta}{\alpha}\right)^n \right\} =\langle \alpha \rangle^s {}_2F_1\left(-s, -s ; 1 ; \frac{\beta}{\alpha} \right).
\end{align*}
\end{proof}

\begin{proof}[Proof of Theorem \ref{main3}]
Put $g(t_1, t_2)=c^{-1}(t_1+t_1^{-1}+t_2+t_2^{-1}) \in \mathfrak{m}_p[t_1^{\pm 1}, t_2^{\pm 1}]$. By the similar computation as the proofs of Proposition \ref{analogueRV} and Theorem \ref{main2}, we find that
\begin{align*}
Z_p(s, f)&=\langle c \rangle^s \int_{T_p^2} \left(1+g(z_1, z_2)\right)^s \frac{dz_1}{z_1}\frac{dz_2}{z_2} \\
&=\langle c \rangle^s \int_{T_p^2} \sum_{m=0}^\infty \binom{s}{m} g(z_1, z_2)^m \frac{dz_1}{z_1}\frac{dz_2}{z_2} \\
&=\langle c \rangle^s \sum_{m=0}^\infty \binom{s}{2m} c^{-2m} \sum_{i=0}^m \frac{(2m)!}{(i!(m-i)!)^2} \\
&=\langle c \rangle^s \sum_{m=0}^\infty \frac{\left(-\frac{s}{2}\right)_m\left(\frac{1-s}{2}\right)_m}{(1)_m(1)_m} \left(\frac{4}{c^2}\right)^m \sum_{i=0}^m \binom{m}{i}^2 \\
&=\langle c \rangle^s \sum_{m=0}^\infty \frac{\left(-\frac{s}{2}\right)_m\left(\frac{1-s}{2}\right)_m}{(1)_m(1)_m} \left(\frac{4}{c^2}\right)^m \frac{4^m \cdot \left(\frac{1}{2}\right)_m}{m!} \\
&=\langle c \rangle^s {}_3F_2\left(\frac{1}{2}, -\frac{s}{2}, \frac{1-s}{2} ; 1, 1 ; \frac{16}{c^2} \right).
\end{align*}
\end{proof}

\begin{bibdiv}
\begin{biblist}

\bib{Ak09}{article}{
   author={Akatsuka, Hirotaka},
   title={Zeta Mahler measures},
   journal={J. Number Theory},
   volume={129},
   date={2009},
   number={11},
   pages={2713--2734},
}

\bib{AK99}{article}{
   author={Arakawa, Tsuneo},
   author={Kaneko, Masanobu},
   title={Multiple zeta values, poly-Bernoulli numbers, and related zeta functions},
   journal={Nagoya Math. J.},
   volume={153},
   date={1999},
   pages={189-209},
}

\bib{BD99}{article}{
   author={Besser, Amnon},
   author={Deninger, Christopher},
   title={$p$-adic Mahler measures},
   journal={J. Reine Angew. Math.},
   volume={517},
   date={1999},
   pages={19--50},
}

\bib{Br10}{article}{
   author={Br\"auer, Jonas},
   title={Entropies of algebraic $\mathbb{Z}^d$-actions and K-theory},
   date={2010},
   note={Dissertation, M\"unster},
}

\bib{De97}{article}{
   author={Deninger, Christopher},
   title={Deligne periods of mixed motives, $K$-theory and the entropy of
   certain ${\bf Z}^n$-actions},
   journal={J. Amer. Math. Soc.},
   volume={10},
   date={1997},
   number={2},
   pages={259--281},

}

\bib{De09}{article}{
   author={Deninger, Christopher},
   title={$p$-adic entropy and a $p$-adic Fuglede-Kadison determinant}, 
   conference={ title={Algebra, arithmetic, and geometry: in honor of Yu. I. Manin. Vol. I}, }, 
   book={ series={Progr. Math.}, 
   volume={269}, 
   publisher={Birkh\"auser Boston, Boston, MA}, }, 
   isbn={978-0-8176-4744-5}, 
   date={2009}, 
   pages={423--442}, 
   
}

\bib{De12}{article}{ 
   author={Deninger, Christopher}, 
   title={Regulators, entropy and infinite determinants}, 
   conference={ title={Regulators}, }, 
   book={ series={Contemp. Math.}, 
   volume={571}, 
   publisher={Amer. Math. Soc., Providence, RI}, }, 
   isbn={978-0-8218-5322-1}, 
   date={2012}, 
   pages={117--134}, 
   
}

\bib{Ho97}{article}{
   author={Hoffman, Michael E.}, 
   title={The algebra of multiple harmonic series}, 
   journal={J. Algebra}, 
   volume={194}, 
   date={1997}, 
   number={2}, 
   pages={477--495}, 

 }

\bib{KY18}{article}{
   author={Kaneko, Masanobu}, 
   author={Yamamoto, Shuji}, 
   title={A new integral-series identity of multiple zeta values and regularizations}, 
   journal={Selecta Math. (N.S.)}, 
   volume={24}, 
   date={2018}, 
   number={3}, 
   pages={2499--2521}, 
   
 }
 
\bib{Ka21a}{article}{ 
   author={Katagiri, Yu}, 
   title={On $p$-adic entropy of some solenoid dynamical systems}, 
   journal={Kodai Math. J.}, 
   volume={44}, 
   date={2021}, 
   number={2}, 
   pages={323--333}, 
   
}

\bib{Ka21b}{article}{
   author={Katagiri, Yu}, 
   title={Kummer-type congruences for multi-poly-Bernoulli numbers}, 
   journal={Comment. Math. Univ. St. Pauli}, 
   volume={69}, 
   date={2021}, 
   pages={75--85}, 
   
 }

\bib{KLO08}{article}{
   author={Kurokawa, N.},
   author={Lal\'{\i}n, M.},
   author={Ochiai, H.},
   title={Higher Mahler measures and zeta functions},
   journal={Acta Arith.},
   volume={135},
   date={2008},
   number={3},
   pages={269--297},
}

\bib{LSW90}{article}{
   author={Lind, Douglas},
   author={Schmidt, Klaus},
   author={Ward, Tom},
   title={Mahler measure and entropy for commuting automorphisms of compact
   groups},
   journal={Invent. Math.},
   volume={101},
   date={1990},
   number={3},
   pages={593--629},

}

\bib{LW88}{article}{
   author={Lind, D. A.},
   author={Ward, T.},
   title={Automorphisms of solenoids and $p$-adic entropy},
   journal={Ergodic Theory Dynam. Systems},
   volume={8},
   date={1988},
   number={3},
   pages={411--419},

}

\bib{RV97}{article}{
   author={Rodriguez Villegas, Fernando},
   title={Modular Mahler measures. I},
   conference={
      title={Topics in number theory},
      address={University Park, PA},
      date={1997},
   },
   book={
      series={Math. Appl.},
      volume={467},
      publisher={Kluwer Acad. Publ., Dordrecht},
   },
   date={1999},
   pages={17--48},
}

\bib{Sm91}{article}{
   author={Smyth, C. J.},
   title={On measures of polynomials in several variables},
   journal={Bull. Austral. Math. Soc.},
   volume={23},
   date={1981},
   number={1},
   pages={49--63},

}

\bib{Wa97}{book}{
   author={Washington, Lawrence C.},
   title={Introduction to cyclotomic fields},
   series={Graduate Texts in Mathematics},
   volume={83},
   edition={2},
   publisher={Springer-Verlag, New York},
   date={1997},
   pages={xiv+487},
}

\end{biblist}
\end{bibdiv}

\vspace{10pt}
\noindent
Institute of Mathematics for Industry, Kyushu University,\\
744, Motooka, Nishi-ku, Fukuoka, 819-0395, Japan,\\
E-mail address: \textbf{yu.katagiri.s3@gmail.com}

\end{document}